 \newcommand{\R}{\mathbb{R}}
 \newcommand{\Z}{\mathbb{Z}}
\newcommand{\E}{\mathsf{E}}
\newcommand{\Var}{\mathsf{Var}}
 \newcommand{\N}{\mathbb{N}}
\renewcommand{\P}{\mathsf{P}}
 \newcommand{\bx}{{\bf x}}
 \newcommand{\bz}{{\bf z}}
\newcommand{\eps}{\varepsilon}
 \newtheorem{theorem}{Theorem}[section]
 \newtheorem{lemma}{Lemma}[section]
 \newtheorem{remark}{Remark}
\newtheorem{proposition}{Proposition}[section]
\newtheorem{definition}{Definition}[section]
\numberwithin{equation}{section}
\begin{document}


\title{Balls-in-bins models with asymmetric feedback and reflection}

\author{
Mikhail Menshikov
\footnote{Department of Mathematical Sciences, Durham University, UK. \newline
\indent  Email: mikhail.menshikov@durham.ac.uk
}\\
{\small  Durham University}
\and 
Vadim Shcherbakov
\footnote{Department of Mathematics, Royal Holloway,  University of London, UK. \newline
\indent  Email: vadim.shcherbakov@rhul.ac.uk
}\\
{\small  Royal Holloway,  University of London}
}

\maketitle



\begin{abstract}
{\small 
Balls-in-bins  models  describe a random sequential allocation of infinitely many  balls 
into a finite number of  bins. In these models a  ball is placed into  a bin with probability 
proportional to a given function (feedback function), which depends
 on the number of existing balls in the bin. Typically, the feedback function 
 is the same for all bins (symmetric feedback), and  there are no 
constraints on the number of balls in the bins.
In this paper  we study versions of BB models with two bins, in which 
the above assumptions are violated.  
In the first model  of interest the feedback functions can depend on a bin
(BB model with asymmetric feedback).
In the case when both feedback functions are power law and  superlinear, 
 a single bin receives all but finitely many balls almost surely, and we study the probability that 
 this happens for a given bin. In particular, 
under certain initial conditions we derive the normal approximation for  this  probability.
This generalizes  the  result in~\cite{Spencer} obtained  in the case of the symmetric feedback.
The main part of the paper concerns
 the BB model with asymmetric feedback evolving 
subject to certain constraints on the  numbers of allocated balls. 
The  model can  be  interpreted as a transient reflecting  random walk 
in a curvilinear wedge, and we obtain  a complete  classification  of its 
 long term behavior. 
}
\end{abstract}

\noindent {{\bf Keywords:} Balls-in-bins  model,  feedback function, 
pure birth process, explosion, normal approximation, growth model,  reflection, boundary effects}

\section{Introduction}

A balls-in-bins (BB) model  is a classic probabilistic 
model describing a random  sequential allocation of infinitely many balls into a finite number of bins.
The probability of allocating a ball into a bin is proportional to a function, which depends on
 the number of existing balls in the bin. 
In the standard setup, the feedback function is the same for all bins (i.e. 
the feedback is symmetric), and there are no constraints on the number of allocated balls.

This paper concerns  two models of random sequential allocation of balls into bins,
which  can be regarded as versions of commonly studied  BB models 
with a power  law feedback function $f(x)=x^{\beta}$, where $\beta>0$. 
The following results for such s BB model are known (e.g. see~\cite{Spencer},~\cite{Oliveira} and references therein). If $\beta\leq 1$, then, with probability one,
all bins receive infinitely many balls. For example, if $\beta=1$ (the  classical P\'olya  urn model), then 
the  distribution of  fractions of balls in the  bins converges to 
a  Dirichlet  distribution.
On the other hand, if $\beta>1$, then, with probability one,   a single random bin  
receives  all but finitely many balls. 

Our first model of  interest  goes as follows.
Suppose that there are  two bins labeled  1 and 2 respectively.
 If $x_i$ is the number of existing balls in  the bin $i=1,2$,
 then the next ball 
is placed in the bin $i$ with probability proportional to $x_i^{\beta_i}$, where 
$\beta_i$, $i=1,2$ are given positive constants.
If $\beta_1=\beta_2$ (the symmetric case), then this is the model studied
in~\cite{Spencer}.
Here we focus on the asymmetric case $\beta_1\neq \beta_2$ and both 
$\beta_1>1$ and $\beta_2>1$, in which, similar to the symmetric one,  all but finitely many 
 balls are placed in a single random bin almost surely, and we study the probability 
 that this  will happen in a given bin.
The probability  depends on initial conditions, and is trivially equal to $1$ or $0$ (depending on a bin) in most cases. There is, however, a subset of initial conditions, for which this probability is non-trivial,
and we derive the normal  approximation for it. 
This generalizes  the  result in~\cite{Spencer},  where the normal approximation 
for this probability is obtained  in the symmetric case.

The main results of this paper concern the other model.
This model can be thought of as the BB model with asymmetric feedback, where 
the balls are allocated subject to certain  constraints.
The model can be naturally  interpreted as a  reflecting  random walk 
in a curvilinear wedge, and   in what follows we call it the BB model with  reflection.

We obtain a complete  classification  of the long term behavior of the BB model with reflection 
 in the case when the boundaries of the wedge are given by power law functions.
Briefly this can be described as follows.
Let $\zeta_i(t)$ be the number of balls in the bin $i=1,2$ at time $t$.
Then, with probability one, the process $(\zeta_1(t), \zeta_2(t))$ 
 eventually confines 
to a strip of a finite width along the boundary of the wedge.  Moreover, we  exactly
calculate the width of the strip. The width is an integer that 
 depends on the model parameters. 
This effect is somewhat  similar to boundary effects  detected in 
competition processes  in~\cite{MS18} and~\cite{SV19}.

Our motivation for the models in this paper has arisen from previous studies 
of growth models that  can be thought of reinforced urn models with graph based interaction.
In these models bins are labeled by vertices of a graph, and allocation probabilities at a bin 
 depend on the configuration of allocated balls in the neighbourhood of that bin.
 Several such growth models have been studied 
in the case where the allocation probability at a bin  is proportional  
to a log-linear  function of the  local configuration  (see~\cite{CMSV}, \cite{MS20}
and\cite{SV10}). A strong reinforcement mechanism in those models 
allowed to characterize precisely their  limit behavior. 
In contrast to the growth models with the log-linear interaction, 
the growth models with interaction given by power law  functions are harder 
to analyze. The  BB model with the power law asymmetric feedback can be regarded as 
 such a growth model {\it without} interaction. 
Secondly, the BB model with reflection 
can be regarded as a toy model mimicking the behavior of one of these growth 
models in  a very particular situation.

The rest of the paper is organized as follows. In Section~\ref{results}
we formally define both  BB models of interest   and state the main results.
 In Section~\ref{prelim1} we collect
some known  facts about pure birth processes with continuous time. 
These facts are  used in Section~\ref{proofs-no-reflection}
 to study the  long term behavior of the BB model with asymmetric feedback.
In Sections~\ref{lower}-\ref{upper} we prove a series of lemmas  that describe 
the most likely long term behavior of the BB model with reflection. 
These  statements are used in Section~\ref{proof-reflection-theorems}
to prove the main  results (Theorems~\ref{main-sym} and~\ref{main-asym} below) 
for the BB model with reflection.
Finally,  in Section~\ref{open} we state an open problem.

\section{Models and results}
\label{results}

Start with some notations and elementary facts.
 Let $\N=\{1,2....\}$ and $\Z_{+}=\{0,1,2,...\}$ be  sets of natural numbers and non-negative numbers 
 respectively. We assume that all random variables and processes are realised 
on a probability space $(\Omega, {\cal F}, \P)$. The expectation with respect to the probability 
$\P$ is denoted by $\E$. 
Given a Markov process with values  in $\N^2$  we denote by  $\P_{\bz}$ the 
distribution of the process  started at $\bz\in \N^2$.
Given quantities $a(m)$ and $b(m)$ depending on $m\in \N$, we write 
  $a(m)\sim b(m)$ to denote  the fact that $a(m)/b(m)\to 1$, as $m\to \infty$.

Throughout we deal   with many analytical equations involving integer valued  quantities.
This formally requires using the floor function and   the ceiling function.  
However,  most of the  equations are of interest for us  only asymptotically, 
when  the effect of applying  floor/ceiling functions can be neglected.
Therefore, to simplify notations, we  skip writing these functions.

\subsection{The model with asymmetric feedback}

In this section we define  the  BB model with asymmetric feedback and state the corresponding results.
The model is as follows. 
Infinitely many balls are sequentially placed 
in two available bins according to the following rule. 
If $x_i$ is the number of existing  balls in the bin $i=1,2$, then the next arriving ball 
is placed in this bin with probability proportional to $x_i^{\beta_i}$, where $\beta_1>0$ and $\beta_2>0$ are given constants. 
Let $X_i(t)$ be  the number of balls in the bin $i=1,2$ at time $t\in\Z_{+}$.
Then the random process $X(t)=(X_1(t), X_2(t))\in \N^2$, $t\in \Z_{+}$, is  a discrete time Markov chain (DTMC) 
 with the  transition probabilities
\begin{equation}
\label{prob1}
\begin{split}
\P_{\bx}(X_1(1)=x+1, X_2(1)=y)&=
\frac{x^{\beta_1}}{x^{\beta_1}+y^{\beta_2}},\\
\P_{\bx}(X_1(1)=x, X_2(1)=y+1)&=
\frac{y^{\beta_2}}{x^{\beta_1}+y^{\beta_2}},
\end{split}
\end{equation}
for $\bx=(x,y)$.

Define events
\begin{equation}
\label{A1}
A_i=\left\{\text{\it all but finitely many balls are placed in  the bin } i\right\},\quad i=1,2.
\end{equation}

\begin{theorem}
\label{T0}

1) If both $\beta_1>1$ and $\beta_2>1$, then 
$\P_{\bx}\left(A_1\right)+\P_{\bx}\left(A_2\right)=1$
for any  $\bx\in\N^2$.

2) If $\beta_1\leq 1$ and $\beta_2>1$,
then $\P_{\bx}\left(A_2\right)=1$ for any  $\bx\in\N^2$.
\end{theorem}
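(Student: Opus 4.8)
The key tool is the standard embedding of a balls-in-bins model into continuous-time pure birth processes (Rubin's construction / the "Athreya–Karlin" trick), which I expect is exactly what Section \ref{prelim1} collects. So the plan is: run two independent pure birth processes, one for each bin, where the process tracking bin $i$ jumps from $x$ to $x+1$ at rate $x^{\beta_i}$ (started from the given initial value), and observe that the discrete-time chain $X(t)$ is obtained from this pair by recording the jumps in the order they occur in continuous time. Under this coupling, the event $A_i$ that bin $i$ receives all but finitely many balls corresponds exactly to the event that bin $i$'s pure birth clock reaches $+\infty$ in finite time (explodes) strictly before bin $j$'s does; equivalently, that bin $i$ explodes and bin $j$ does not (since with probability one the two explosion times, when finite, are distinct, and the non-exploding process keeps ticking forever and thus would receive infinitely many balls).

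For part 1), recall the Feller criterion: a pure birth process with rates $a_x$ explodes almost surely iff $\sum_x 1/a_x < \infty$. With $a_x = x^{\beta_i}$ and $\beta_i > 1$, the series $\sum_x x^{-\beta_i}$ converges, so \emph{each} bin's pure birth process explodes almost surely in finite time, and these explosion times $\tau_1, \tau_2$ are independent, continuous (hence almost surely unequal) random variables. On $\{\tau_1 < \tau_2\}$, bin $1$ has received infinitely many balls by time $\tau_1$ while bin $2$ has received only finitely many, which is precisely $A_1$; symmetrically for $A_2$. Since $\P(\tau_1 = \tau_2) = 0$, we get $\P_\bx(A_1) + \P_\bx(A_2) = \P_\bx(\tau_1 < \tau_2) + \P_\bx(\tau_2 < \tau_1) = 1$. (One should also note $A_1 \cap A_2 = \emptyset$ trivially, so this is a genuine partition up to null sets.)

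For part 2), when $\beta_1 \le 1$ the series $\sum_x x^{-\beta_1}$ diverges, so by Feller's criterion bin $1$'s pure birth process does \emph{not} explode: $\tau_1 = +\infty$ almost surely. Meanwhile $\beta_2 > 1$ gives $\tau_2 < \infty$ almost surely. Hence $\tau_2 < \tau_1$ with probability one, which under the coupling says that after the finite time $\tau_2$ bin $2$ has received infinitely many balls and bin $1$ only finitely many — i.e. $A_2$ occurs almost surely, so $\P_\bx(A_2) = 1$.

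\textbf{Main obstacle.} The conceptual content is entirely in making the embedding rigorous: one must justify that the discrete-time allocation rule \eqref{prob1} really is recovered as the jump-order of the two independent pure birth processes (this uses the memorylessness of exponential holding times and the fact that competing exponentials split proportionally to their rates), and that the event "all but finitely many balls go to bin $i$" translates correctly to an explosion-time comparison — in particular that on $\{\tau_i < \infty = \tau_j\}$ the surviving bin $j$ does receive infinitely many balls (clear, since its clock never stops) and that on $\{\tau_i < \tau_j < \infty\}$ bin $i$ has received infinitely many and bin $j$ only finitely many balls strictly before the common explosion horizon $\tau_i$. Once the dictionary is set up, both parts are immediate consequences of Feller's explosion criterion and the continuity/independence of the $\tau_i$, so I would lean on the preliminary facts in Section \ref{prelim1} for the embedding and keep the proof itself short.
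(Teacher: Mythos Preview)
Your proposal is correct and matches the paper's proof essentially line for line: both argue via Rubin's construction, embedding $X(t)$ as the jump chain of two independent pure birth processes with rates $k^{\beta_i}$, translating $A_i$ into the event $\{T_{x,1} \lessgtr T_{y,2}\}$, and then invoking the Feller explosion criterion together with independence and absolute continuity of the explosion times. There is nothing to add.
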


\begin{remark}
{\rm 
In other words,  Theorem~\ref{T0} states that, if at least one of  $\beta$-parameters  is greater than one, then
the monopolistic regime realizes almost surely. The theorem follows from the well known argument, that consists in
embedding  the DTMC $X(t)$ into a pair of continuous time pure birth processes
(see Section~\ref{prelim1} for details).
}
\end{remark}

Given  $1<\beta_1\leq \beta_2$, define
\begin{equation}
\label{alpha-gamma}
\alpha_{cr}=\frac{\beta_1-1}{\beta_2-1}
\quad\text{and}\quad
\nu_{cr}=
\alpha_{cr}^{\frac{1}{\beta_2-1}}
=\left(\frac{\beta_1-1}{\beta_2-1}\right)^{\frac{1}{\beta_2-1}}.
\end{equation}

 \begin{theorem}
\label{T1a}
Let $X(t)=(X_1(t), X_2(t))$ be the DTMC with transition probabilities~\eqref{prob1},
where $1<\beta_1\leq \beta_2$. 
Assume that  $\bx=(x, y)\in\N^2$, where 
$$y=y(x)=\nu x^{\alpha}+o(x^{\alpha}),\quad\text{as}\quad x\to \infty,$$
for some  $\alpha\in (0,1)$ and $\nu>0$.
\begin{enumerate}
\item[1)] If either $\alpha<\alpha_{cr}$, or  
$\alpha=\alpha_{cr}$ and $\nu<\nu_{cr}$, 
then  
$
\lim_{x\to\infty}\P_{\bx}\left(A_1\right)=1$.
\item[2)] If  either $\alpha>\alpha_{cr}$, or $\alpha=\alpha_{cr}$ and $\nu>\nu_{cr}$, then 
$\lim_{x\to\infty}\P_{\bx}\left(A_2\right)=1$.
\end{enumerate}
\end{theorem}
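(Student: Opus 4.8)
The standard tool here (already flagged in the Remark after Theorem \ref{T0}) is the Rubin-type embedding: realize $X_1$ and $X_2$ as two independent continuous-time pure birth processes $Y_1, Y_2$ with jump rates $n \mapsto n^{\beta_i}$, run in their own independent exponential clocks, and observe that the DTMC $X(t)$ is recovered by recording which process jumps first. Since $\beta_i > 1$, each $Y_i$ explodes in finite time; write $\tau_i$ for the explosion time of $Y_i$ started from its initial value. Then the event $A_1$ (bin $1$ receives all but finitely many balls) coincides, up to a null set, with $\{\tau_1 < \tau_2\}$, so $\Probx(A_1) = \Prob(\tau_1 < \tau_2)$ with $Y_1(0) = x$, $Y_2(0) = y$. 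The whole theorem thus reduces to understanding the joint law of the two explosion times as $x \to \infty$ with $y = \nu x^\alpha + o(x^\alpha)$.

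The next step is to use the asymptotics of explosion times of pure birth processes collected in Section \ref{prelim1} (which I will assume). For a pure birth process with rates $n^\beta$, $\beta>1$, started from a large value $m$, the explosion time $\tau$ concentrates: $\tau = \sum_{n \ge m} E_n / n^\beta$ where $E_n$ are i.i.d.\ unit exponentials, so $\E\tau = \sum_{n\ge m} n^{-\beta} \sim \frac{m^{1-\beta}}{\beta-1}$ and $\Var\tau = \sum_{n \ge m} n^{-2\beta} \sim \frac{m^{1-2\beta}}{2\beta-1}$. Hence the fluctuations of $\tau$ are of smaller order than its mean: $\sqrt{\Var\tau}/\E\tau \to 0$ as $m \to \infty$. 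Therefore, to leading order, $\tau_1 \approx \frac{x^{1-\beta_1}}{\beta_1 - 1}$ and $\tau_2 \approx \frac{y^{1-\beta_2}}{\beta_2 - 1}$ deterministically. Comparing these two deterministic leading terms is exactly the content of the dichotomy: with $y = \nu x^\alpha$, we have $\tau_2 \approx \frac{\nu^{1-\beta_2}}{\beta_2-1} x^{\alpha(1-\beta_2)}$, and $\tau_1 \approx \frac{1}{\beta_1-1} x^{1-\beta_1}$. Comparing exponents: $\tau_1 \ll \tau_2$ iff $1 - \beta_1 < \alpha(1-\beta_2)$, i.e.\ (dividing by the negative quantity $1-\beta_2$, noting $1-\beta_1<0$ too) iff $\alpha < \frac{\beta_1 - 1}{\beta_2 - 1} = \alpha_{cr}$; and when $\alpha = \alpha_{cr}$ the exponents match and the prefactors decide: $\tau_1 < \tau_2$ asymptotically iff $\frac{1}{\beta_1-1} < \frac{\nu^{1-\beta_2}}{\beta_2-1}$, which rearranges to $\nu^{\beta_2-1} < \frac{\beta_1-1}{\beta_2-1} = \alpha_{cr}$, i.e.\ $\nu < \nu_{cr}$. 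This matches the stated critical quantities in \eqref{alpha-gamma}.

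To turn "leading-order comparison" into a genuine limit statement for the probabilities, I would argue as follows. In the strictly sub/supercritical regimes ($\alpha \neq \alpha_{cr}$, or $\alpha = \alpha_{cr}$ with $\nu \neq \nu_{cr}$), the two random explosion times $\tau_1, \tau_2$ are, after centering, concentrated on scales negligible compared to the gap between their means; a Chebyshev (or Kolmogorov maximal-type) bound on each $\tau_i$ around its mean then forces $\Prob(\tau_1 < \tau_2) \to 1$ in case 1) and $\to 0$ in case 2), giving $\Probx(A_1) \to 1$ resp.\ $\Probx(A_2) \to 1$ (using also $\Probx(A_1) + \Probx(A_2) = 1$ from Theorem \ref{T0}, part 1). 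The one genuinely delicate point — the main obstacle — is making the variance/fluctuation estimates uniform enough and handling the $o(x^\alpha)$ error in $y(x)$: I need that the perturbation $y = \nu x^\alpha + o(x^\alpha)$ only shifts $\E\tau_2$ by a relative amount $o(1)$, which follows because $\E\tau_2 \sim \frac{y^{1-\beta_2}}{\beta_2-1}$ is a smooth power of $y$ and $(\nu x^\alpha + o(x^\alpha))^{1-\beta_2} = \nu^{1-\beta_2} x^{\alpha(1-\beta_2)}(1 + o(1))$. So in every case the comparison is driven by a clean separation of deterministic scales, and the probabilistic content is confined to showing the explosion times don't fluctuate enough to reverse the inequality; I expect no difficulty beyond carefully bookkeeping these asymptotics.
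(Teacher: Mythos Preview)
Your proposal is correct and follows essentially the same approach as the paper: embed into independent explosive pure birth processes via Rubin's construction, compare the means of the explosion times $\E(T_{x,1})\sim \frac{x^{1-\beta_1}}{\beta_1-1}$ and $\E(T_{y(x),2})\sim \frac{y(x)^{1-\beta_2}}{\beta_2-1}$, and then use Chebyshev (equation~\eqref{cheb1}) together with independence to show that the fluctuations cannot overturn the strict separation of means in the non-critical regimes. The paper's proof is exactly this, with the same handling of the $o(x^{\alpha})$ term and the same identification of the critical $(\alpha_{cr},\nu_{cr})$.
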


\begin{remark}
{\rm 
Note that the curve 
\begin{equation}
\label{curve}
y=\nu_{cr}x^{\alpha_{cr}}=\alpha_{cr}^{\frac{1}{\beta_2-1}}x^{\frac{\beta_1-1}{\beta_2-1}}
\end{equation}
determines a phase transition in the long term behavior of the process.
The curve  has a natural  probabilistic interpretation explained in Remark~\ref{cr-curve-sense} below.
}
\end{remark}

Theorem~\ref{T1b} below deals with the  critical case $\alpha=\alpha_{cr}, \nu=\nu_{cr}$ 
not covered by Theorem~\ref{T1a}.

\begin{theorem}
\label{T1b}
Let $X(t)=(X_1(t), X_2(t))$ be the DTMC with transition probabilities~\eqref{prob1}, where 
 $1<\beta_1\leq \beta_2$.
Assume that  $\bx=(x, y)\in\N^2$ is such that 
$y=y(x)=\nu_{cr}x^{\alpha_{cr}}+\mu x^{\delta}+o(x^{\delta})$,
as $x\to \infty,$
for some  $\mu\in\R$ and $\delta\in [0, \alpha_{cr})$, and let 
\begin{equation}
\label{rho}
\rho=
-\mu\left(\frac{2\beta_2-1}{\nu_{cr}}\right)^{\frac{1}{2}}.
\end{equation}
\begin{enumerate}
\item[ 1)] If $\delta=\frac{\alpha_{cr}}{2}$, then 
$$\lim_{x\to\infty}\P_{\bx}\left(A_1\right)=\Phi(\rho)\quad\text{and}\quad 
\lim_{x\to\infty}\P_{\bx}\left(A_2\right)=1-\Phi(\rho),$$ 
where $\Phi$ is the cumulative distribution function of the standard normal distribution.
\item[2)] If  $\delta\in\left[0,\frac{\alpha_{cr}}{2}\right)$, then 
$\lim_{x\to\infty}\P_{\bx}\left(A_1\right)=
\lim_{x\to\infty}\P_{\bx}\left(A_2\right)
=\frac{1}{2}.
$
\item[3)] If  $\delta\in\left(\frac{\alpha_{cr}}{2}, \alpha_{cr}\right)$ and $\mu\neq 0$, then
\begin{enumerate}
\item $\lim_{x\to\infty}\P_{\bx}\left(A_1\right)=0$ for $\mu>0$, 
\item  $\lim_{x\to\infty}\P_{\bx}\left(A_1\right)=1$ for $\mu<0$.
\end{enumerate}
\end{enumerate}
\end{theorem}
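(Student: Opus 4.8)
The plan is to use the embedding of the DTMC $X(t)$ into a pair of independent continuous‑time pure birth processes, the same device that proves Theorem~\ref{T0} and is recalled in Section~\ref{prelim1}. Let $Y_1$ and $Y_2$ be independent pure birth processes in which the exit rate from state $k$ equals $k^{\beta_1}$, respectively $k^{\beta_2}$, started from $x$, respectively $y$, and let $T_i$ be the explosion time of $Y_i$; these are almost surely finite because $\beta_i>1$. Since $\P(T_1=T_2)=0$, one has $\P_{\bx}(A_1)=\P(T_1<T_2)$ and $\P_{\bx}(A_2)=\P(T_2<T_1)$. Writing $T_1=\sum_{k\geq x}\tau^{(1)}_k$ and $T_2=\sum_{k\geq y}\tau^{(2)}_k$, where the $\tau^{(i)}_k$ are independent and $\tau^{(i)}_k$ is exponential with mean $k^{-\beta_i}$, the whole theorem reduces to estimating $\P(T_1-T_2<0)$ for a difference of two independent sums of independent (non‑identically distributed) exponentials.

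First I would record the asymptotics of the first two moments. Integral comparison gives
$$\E T_i=\sum_{k\geq z_i}k^{-\beta_i}=\frac{z_i^{1-\beta_i}}{\beta_i-1}+O\!\big(z_i^{-\beta_i}\big),\qquad \Var T_i=\sum_{k\geq z_i}k^{-2\beta_i}=\frac{z_i^{1-2\beta_i}}{2\beta_i-1}+O\!\big(z_i^{-2\beta_i}\big),$$
with $z_1=x$ and $z_2=y$. Substituting $y=\nu_{cr}x^{\alpha_{cr}}+\mu x^{\delta}+o(x^{\delta})$ and using the identities $\alpha_{cr}(1-\beta_2)=1-\beta_1$ and $\nu_{cr}^{1-\beta_2}=\alpha_{cr}^{-1}$ built into~\eqref{alpha-gamma}, the leading terms of $\E T_1$ and $\E T_2$ cancel — this is exactly why the curve~\eqref{curve} is critical — and a first‑order Taylor expansion of $y^{1-\beta_2}$ in the small parameter $x^{\delta-\alpha_{cr}}$ gives $\E T_1-\E T_2=\frac{(\beta_2-1)\mu}{(\beta_1-1)\nu_{cr}}\,x^{\,1-\beta_1+\delta-\alpha_{cr}}(1+o(1))$. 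On the other hand $\Var(T_1-T_2)=\Var T_1+\Var T_2$, and since $\beta_1\leq\beta_2$ forces $\alpha_{cr}(1-2\beta_2)\geq 1-2\beta_1$, the dominant contribution is $\Var T_2\sim\frac{\alpha_{cr}^{-2}\nu_{cr}^{-1}}{2\beta_2-1}\,x^{\,\alpha_{cr}(1-2\beta_2)}$. The arithmetic fact that drives the theorem is that the exponent $1-\beta_1+\delta-\alpha_{cr}$ of the mean gap equals one half of the exponent $\alpha_{cr}(1-2\beta_2)$ of the variance precisely when $\delta=\alpha_{cr}/2$; a short computation then shows that in the setting of part~1
$$\frac{-(\E T_1-\E T_2)}{\sqrt{\Var(T_1-T_2)}}\ \longrightarrow\ \rho$$
with $\rho$ as in~\eqref{rho}, while this ratio tends to $0$ in the setting of part~2 and to $-\infty$ (if $\mu>0$) or $+\infty$ (if $\mu<0$) in the setting of part~3.

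It remains to convert these deterministic limits into probabilities. For this I would prove the central limit theorem
$$\frac{(T_1-T_2)-\E(T_1-T_2)}{\sqrt{\Var(T_1-T_2)}}\ \Longrightarrow\ \mathcal N(0,1),\qquad x\to\infty,$$
by applying Lyapunov's criterion to the triangular array formed by the centred variables $\{\tau^{(1)}_k-\E\tau^{(1)}_k\}_{k\geq x}$ together with $\{-(\tau^{(2)}_k-\E\tau^{(2)}_k)\}_{k\geq y}$, which are all independent. Because the third absolute central moment of an exponential variable is a fixed multiple of the cube of its mean, $\sum_{k\geq z}\E|\tau^{(i)}_k-\E\tau^{(i)}_k|^3\asymp z^{\,1-3\beta_i}$, and dividing by $(\Var T_i)^{3/2}\asymp z^{\,\frac32(1-2\beta_i)}$ yields a Lyapunov ratio of order $z^{-1/2}\to0$ for each of the two processes; since $(\Var T_1+\Var T_2)^{3/2}\geq(\Var T_i)^{3/2}$, the Lyapunov condition for the combined array follows. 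Writing $W_x$ for the left‑hand side of the last display, part~1 is then immediate: $\P_{\bx}(A_1)=\P(T_1<T_2)=\P\big(W_x<-(\E T_1-\E T_2)/\sqrt{\Var(T_1-T_2)}\big)\to\Phi(\rho)$ because $\Phi$ is continuous and the threshold converges to $\rho$, and symmetrically $\P_{\bx}(A_2)\to1-\Phi(\rho)$; in part~2 the threshold converges to $0$, giving the common limit $\Phi(0)=\tfrac12$. Part~3 does not even need the CLT: there $\Var(T_1-T_2)=o\big((\E T_1-\E T_2)^2\big)$, so Chebyshev's inequality shows $T_1-T_2$ takes the sign of its mean with probability tending to $1$, whence $\P_{\bx}(A_1)\to0$ for $\mu>0$ and $\P_{\bx}(A_1)\to1$ for $\mu<0$.

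The part that needs genuine care is not a single hard step but the accounting of remainders: one must check that the $o(x^{\delta})$ in the hypothesis on $y(x)$, the $O(z^{-\beta_i})$ and $O(z^{-2\beta_i})$ corrections to $\E T_i$ and $\Var T_i$, and the quadratic term of the Taylor expansion of $y^{1-\beta_2}$ are all of smaller order than $\sqrt{\Var(T_1-T_2)}$, the scale against which the mean gap is being compared. This boils down to a handful of inequalities between exponents of $x$, which are sharp only near the endpoints $\delta=0$ and $\delta=\alpha_{cr}/2$; for instance at $\delta=\alpha_{cr}/2$ one needs the $\mu x^{\delta}$ perturbation to dominate the $O(x^{-\beta_1})$ correction to $\E T_1$, which holds because $\alpha_{cr}\leq 1$.
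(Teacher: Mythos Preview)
Your approach is essentially the paper's own: embed via Rubin's construction into two independent explosive birth processes, compute the asymptotics of $\E(T_1-T_2)$ and $\Var(T_1-T_2)$ along the curve $y=\nu_{cr}x^{\alpha_{cr}}+\mu x^{\delta}$ (this is the paper's Proposition~\ref{P-on-curve}, with the same cancellation of leading terms and the same ratio $\rho x^{\delta-\alpha_{cr}/2}$), and then pass to $\Phi$ via a Lyapunov/Berry--Esseen argument. The only cosmetic differences are that the paper packages the normal approximation as a Berry--Esseen bound~\eqref{Berry} for each $T_i$ and asserts it transfers to $V_{x,y}$, whereas you apply Lyapunov directly to the combined array, and that you shortcut Part~3 with Chebyshev while the paper reads all three cases off the normal approximation.
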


\begin{remark}
\label{Spencer-R}
{\rm
If $\beta_1=\beta_2=\beta>1$, and 
$X_1(0)=a$  and $X_2(0)=a-\frac{\rho}{\sqrt{2\beta-1}}\sqrt{a}$, then, 
by  Part 1) of Theorem~\ref{T1b}, the  
 probability of event $A_1$  is approximated by $\Phi(\rho) $
  for sufficiently large $a$.
This recovers  the result of \cite{Spencer}, where  they obtained the  normal approximation $\Phi(\rho)$
for the same probability in the symmetric case 
 assuming that the process starts at  $(x, y)$, where 
$x=a+\frac{\rho}{\sqrt{4\beta-2}}\sqrt{a}$ and
$y=a-\frac{\rho}{\sqrt{4\beta-2}}\sqrt{a}$.
The minor difference in  initial conditions is just due to our attempt
to somehow accommodate  both the symmetric and the asymmetric case in a single statement
(clearly, other variants of the initial condition are possible). 
 }
\end{remark}

\subsection{The model with reflection}

In this section we define  the  BB model with reflection  and state the corresponding results.

Given two monotonically increasing and  sufficiently smooth 
 functions $\varphi, \psi: \R_{+}\to \R_{+}$, such that $\varphi(x)<\psi(x)$ for all $x\geq 0$,
  define the domain 
\begin{equation}
\label{domain}
Q_{\varphi, \psi}:=\{(x, y)\in\N^2: \varphi(x)\leq y\leq \psi(x)\}.
\end{equation}
Given constants  $\beta_1>0$ and $\beta_2>0$ consider a DTMC 
$\zeta(t)=(\zeta_1(t), \zeta_2(t))\in Q_{\varphi, \psi},\, t\in \Z_{+}$
with the following transition probabilities  
\begin{equation}
\label{prob21-22}
\begin{split}
\P_{\zeta}(\zeta_1(1)=&\, x+1,\zeta_2(1)=y)=\frac{x^{\beta_1}}{x^{\beta_1}+y^{\beta_2}}\\
\P_{\zeta}(\zeta_1(1)=&\, x, \zeta_2(1)=y+1)=\frac{y^{\beta_2}}{x^{\beta_1}+y^{\beta_2}}\\
&\text{for}\quad \zeta=(x,y): \varphi(x+1)\leq y\leq \psi(x)-1;
\end{split}
\end{equation}
and 
\begin{equation}
\label{prob23}
\begin{split}
\P_{\zeta}(\zeta_1(1)=&\, x, \zeta_2(1)=y+1)=1\\
&\text{for}\quad \zeta=(x,y):\varphi(x)\leq y< \varphi(x+1),
\end{split}
\end{equation}
\begin{equation}
\label{prob24}
\begin{split}
\P_{\zeta}(\zeta_1(1)=&\, x+1, \zeta_2(1)=y)=1\\
&\text{for}\quad \zeta=(x,y): \psi(x)-1<y\leq\psi(x).
\end{split}
\end{equation}

\begin{definition}
\label{jumps-reflections}
{\rm 
\begin{itemize}
\item 
We call the  Markov chain $\zeta(t)$ with transition probabilities given in equations~\eqref{prob21-22}-\eqref{prob24}
 the BB model with reflection in the domain $Q_{\varphi, \psi}$. 
Curves  $y=\varphi(x)$ and $y=\psi(x)$ are called the lower boundary and the upper boundary respectively. 
\item 
The one-step transitions 
of the DTMC $\zeta(t)$   described by equations~\eqref{prob21-22} are called 
  jumps (jumps to the right and jumps up), 
and the one-step transitions described by equations~\eqref{prob23}--\eqref{prob24} are called reflections (by the lower and by the upper boundary respectively).
\item 
We say that the process is reflected by the lower (upper) boundary, when 
the transition described by~\eqref{prob23} (by~\eqref{prob24}) occurs.
\end{itemize}
}
\end{definition}

\begin{remark}
{\rm Note that, by  construction,  in the interior of the domain  $Q_{\varphi,\psi}$
 the DTMC $\zeta(t)$ evolves  
  in the same way 
as  the DTMC   $X(t)$ with transition probabilities~\eqref{prob1}.
Equations~\eqref{prob23}--\eqref{prob24}   
describe a reflection mechanism  preventing the process from  leaving the domain.
}
\end{remark}
\begin{remark}
{\rm 
It should be also noted  that our results for the DTMC $\zeta(t)$ 
concern only the case, when the boundaries are given by power
law functions, that is $\varphi(x)=x^{\alpha}$ and $\psi(x)=x^{\gamma}$ for some 
$0<\alpha<\gamma$. However,  it is convenient 
 to have the model domain  defined in the general case.
First, it is convenient for  future references (e.g. Section~\ref{open}).
Also,  conditions in both~\eqref{prob23} and~\eqref{prob24} 
look   more transparent  (at least, as it seems to us)   in the general case.
}
\end{remark}

Let $\zeta(t)$ be the DTMC with transition probabilities~\eqref{prob21-22}-\eqref{prob24}.
Define events
\begin{equation}
\label{Ak}
\begin{split}
A_{\alpha, k}=&\{\limsup_{t\to \infty}(\zeta_2(t)-\zeta_1^{\alpha}(t))=k\}\cap
\{\liminf_{t\to \infty}(\zeta_2(t)-\zeta_1^{\alpha}(t))=0\}\\
&
\text{ for}\quad \alpha\in (0,1)\quad \text{and}\quad  k\in \N.
\end{split}
\end{equation}

\begin{equation}
\label{Ak-tilde}
\begin{split}
\widetilde{A}_{\gamma, k}=&
\{\limsup\limits_{t\to \infty}
(\zeta_1(t)-\zeta_2^{1/\gamma}(t))=k\}\cap
\{\liminf\limits_{t\to \infty}(\zeta_1(t)-\zeta_2^{1/\gamma}(t))=0\}\\
& \text{ for}\quad\gamma\geq 1\quad\text{and}\quad k\in \N.
\end{split}
\end{equation}

\begin{equation}
\label{Ek}
\begin{split}
E_{k}=&
\{\limsup\limits_{t\to \infty}(\zeta_1(t)-\zeta_2(t))=k\}\cap
\{\zeta_1(t)-\zeta_2(t)=0 \text{ i.o.}\}\cap 
\{\zeta_1(t)-\zeta_2(t)=k\text{ i.o.}\}\\
&\text{ for}\quad k\in \N.
\end{split}
\end{equation}

\begin{equation}
\label{Bgamma}
 B_{\gamma}=\{\limsup_{t\to \infty}(\zeta_1^{\gamma}(t)-\zeta_2(t))=1\}\cap
\{\liminf_{t\to \infty}(\zeta_1^{\gamma}(t)-\zeta_2(t))=0\}
\quad\text{for}\quad\gamma\in (0,1).
\end{equation}

\begin{remark}
{\rm 
All events in~\eqref{Ak}-\eqref{Bgamma}  can be easily interpreted geometrically. 
For example, the event $A_{\alpha, k}$ 
 means that given any arbitrary small $\eps>0$  the process eventually confines to the strip
$\{(\zeta_1, \zeta_2): \zeta_1^{\alpha}\leq \zeta_2\leq \zeta_1^{\alpha}+k+\eps\}$, where 
it  fluctuates by
  approaching both the lower boundary $y=x^{\alpha}$  and the curve $y=x^{\alpha}+k$
 at any arbitrarily small distance. 
Note also the special case in~\eqref{Ek}, where 
the event   $E_{k}$  means that the process eventually confines to the 
strip between  the bisector $y=x$ and the straight 
line $y=x-k$, and  visits both of these  lines  infinitely often.
In particular,  the event $E_1$ simply means, that the process 
eventually follows  a "zigzag" trajectory located between the bisector and the line $y=x-1$
(see Remark~\ref{Cor1-bisector}).
}
\end{remark}

Theorems~\ref{main-sym}  and~\ref{main-asym} below are the main  results concerning  the long term behavior
of the model with reflection.

\begin{theorem}
\label{main-sym}
Let $\beta_1=\beta_2=\beta>1$, and 
let $\zeta(t)=(\zeta_1(t), \zeta_2(t))$ be the  BB model with reflection 
in the domain $Q_{\varphi, \psi}$, where 
$\varphi(x)=x^{\alpha}$ and $\psi(x)=x^{\gamma}$ for some $\alpha,  \gamma: 0<\alpha<1<\gamma$.
Then for any $\zeta(0)=\zeta\in Q_{\varphi, \psi}$
 there 
 exist integers  $k_1=k_1(\alpha, \beta)$ and $k_2=k_2(\gamma, \beta)$, such that 
\begin{align}
\label{beta}
\P_{\zeta}\left(A_{\alpha, k_1}\right)+\P_{\zeta}\left(\widetilde{A}_{\gamma, k_2}\right)&=1,
\end{align}
where events  $A_{\alpha, k_1}$ and  $\widetilde{A}_{\gamma, k_2}$ are  
defined in~\eqref{Ak}-\eqref{Ak-tilde},     $k_1$ and $k_2$ are integer 
solutions of  the following inequalities
\begin{equation}
\label{k1-sym}
\frac{\alpha}{(\beta-1)(1-\alpha)}<k_1\leq 1+\frac{\alpha}{(\beta-1)(1-\alpha)}
\end{equation}
and
\begin{equation}
\label{k2-sym}
\frac{1}{(\beta-1)(\gamma-1)}<k_2\leq 1+\frac{1}{(\beta-1)(\gamma-1)}.
\end{equation}
\end{theorem}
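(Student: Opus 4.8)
The plan is to analyze the process near each of the two boundaries separately, via a comparison with the unconstrained chain $X(t)$ and the embedded pure-birth processes from Section~\ref{prelim1}. The symmetric case $\beta_1=\beta_2=\beta$ has the advantage that $\zeta_1+\zeta_2$ increases deterministically by one at every step, so the difference $D(t)=\zeta_2(t)-\zeta_1(t)$ is a one-dimensional object, and the dynamics of $\zeta$ in the wedge is essentially the dynamics of $D(t)$ against the two moving barriers $y=x^\alpha$ (lower) and $y=x^\gamma$ (upper). First I would set up this reduction and record the drift of $D(t)$ in the interior: when $\zeta=(x,y)$ with $x$ large and $y$ comparatively small, a jump up has probability $y^\beta/(x^\beta+y^\beta)\approx (y/x)^\beta$, so the conditional increment of $D$ is $\approx 1 - 2(y/x)^\beta$, i.e.\ a positive drift pushing the walk away from the lower boundary; symmetrically, near the upper boundary $y=x^\gamma$ the walk feels a strong drift pushing it back down. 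The heuristic for the width $k_1$ near the lower boundary is that starting from a point at height $\approx x^\alpha$, the excursion of $D$ that the walk performs before being reflected again has a maximal overshoot determined by balancing the (small) probability of $\sim m$ consecutive ``up'' steps against the number of reflections, which up to time $x$ is of order $x^{1-\alpha}$ (the number of lattice points the boundary $y=x^\alpha$ passes through). This balance is exactly what produces the inequality \eqref{k1-sym}, and likewise \eqref{k2-sym} near the upper boundary where the relevant count of reflections up to abscissa $x$ is of order $x^{\gamma}$ through a Borel--Cantelli-type argument on excursions of $\zeta_1 - \zeta_2^{1/\gamma}$.

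The second step is the ``$0$-$1$ dichotomy'': I would show that with probability one the process is eventually trapped near exactly one of the two boundaries, so that $\P_\zeta(A_{\alpha,k_1}) + \P_\zeta(\widetilde A_{\gamma,k_2}) = 1$. The mechanism is transience: in the interior of the wedge, comparing with $X(t)$ and using Theorem~\ref{T0} (monopoly a.s.\ when $\beta>1$) and the phase-transition curve of Theorem~\ref{T1a}, one sees that below the critical curve $y=\nu_{cr}x^{\alpha_{cr}}$ the $x$-coordinate wins and the walk drifts toward the lower boundary, while above it the $y$-coordinate wins and the walk drifts toward the upper boundary; since $\alpha<1<\gamma$ forces $\alpha<\alpha_{cr}<\gamma$ (here $\alpha_{cr}=1$ in the symmetric case, so in fact $\alpha<1<\gamma$ already straddles it), the interior is split into a lower region attracted to $y=x^\alpha$ and an upper region attracted to $y=x^\gamma$, with no stable behavior in between. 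A Lyapunov/supermartingale argument on a suitable function of the ratio $\zeta_2/\zeta_1$ (or of $D(t)$ rescaled) shows that once the walk crosses into either region it a.s.\ reaches and then stays within bounded distance of the corresponding boundary, and never returns to the other. Combined with the fact that from any starting point the walk a.s.\ enters one of the two regions, this gives the partition of the sample space.

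The third step is to pin down that the trapping width is \emph{exactly} $k_1$ (resp.\ $k_2$), not merely $O(1)$: this is the place where the precise inequalities \eqref{k1-sym}--\eqref{k2-sym} are proved, and it splits into a lower bound (the walk does reach height $\approx x^\alpha + k_1$ along the boundary infinitely often, and also returns to distance $0$ infinitely often) and an upper bound (it exceeds $x^\alpha + k_1$ only finitely often). Both directions are Borel--Cantelli arguments: I would estimate the probability $p_m(x)$ that, starting from a reflection event at abscissa $\approx x$, the walk climbs a net distance $m$ before its next reflection; using the interior transition probabilities this behaves like a product telescoping to roughly $(x^{\alpha-1})^{\,\text{(something linear in }m)}$, and then sum over the $\asymp x^{1-\alpha}$ reflection epochs up to scale $x$ — the series converges iff $m > \alpha/((\beta-1)(1-\alpha))$ and diverges for the integer just below, which is precisely the content of \eqref{k1-sym}. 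The analogous computation at the upper boundary, tracking $\zeta_1-\zeta_2^{1/\gamma}$ and using that reflections off $y=x^\gamma$ occur $\asymp x^\gamma$ times up to abscissa $x$, yields \eqref{k2-sym}. I expect the main obstacle to be making the overshoot/excursion estimates uniform enough — the boundary curves are not straight, so the local drift and the geometry of a single excursion vary with $x$, and controlling the error terms (the $o(x^\alpha)$-type corrections and the discretization from dropping floor/ceiling functions) well enough to get the sharp integer threshold rather than an off-by-one window is the delicate part. Once the excursion probabilities are controlled with the right exponents, the two Borel--Cantelli lemmas and the $0$-$1$ dichotomy assemble into \eqref{beta}, and the detailed verification is deferred to Sections~\ref{lower}--\ref{upper} and \ref{proof-reflection-theorems}.
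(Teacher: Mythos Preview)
Your overall architecture is the paper's: the dichotomy that $\zeta(t)$ eventually reaches and stays near exactly one boundary is obtained by comparison with the unconstrained chain via Theorems~\ref{T0}--\ref{T1a} (the paper's proof says exactly this: ``It follows from Theorem~\ref{T1a}\ldots that, with probability one, the DTMC $\zeta(t)$ hits either the lower, or the upper boundary''), and the exact strip width is then pinned down by two Borel--Cantelli arguments on the number of wrong-direction jumps per inter-reflection block, which is the content of Lemmas~\ref{PR-binom}, \ref{2k-1+eps}, \ref{0+k} and their upper-boundary analogues in Section~\ref{Up-super}.

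That said, several of your heuristics are inverted and would derail the computation if followed literally. First, near the lower boundary $y\approx x^\alpha$ the up-probability is $p\approx(y/x)^\beta\ll 1$, so $\E[\Delta D]=2p-1\approx -1$, not $1-2(y/x)^\beta>0$; the drift is \emph{toward} the lower boundary (and, symmetrically, toward the upper boundary when $y\approx x^\gamma$), which is precisely why trapping occurs at all. Second, the number of reflections off $y=x^\alpha$ up to abscissa $x$ is $\asymp x^\alpha$, not $x^{1-\alpha}$; the quantity $x^{1-\alpha}$ is instead the \emph{length} of a single inter-reflection block (this is $N(x)$ in Lemma~\ref{PR-binom}). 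The paper's Borel--Cantelli runs over block index $n$, with per-block probability of $\geq m$ up-jumps bounded by $\lambda_n^m$ where $\lambda_n\asymp n^{-(\beta-1)(1-\alpha)/\alpha}$ (equation~\eqref{lambda-n} specialised to $\beta_1=\beta_2=\beta$); then $\sum_n\lambda_n^m<\infty$ iff $m>\alpha/((\beta-1)(1-\alpha))$, which is~\eqref{k1-sym}. Finally, the reduction to $D(t)=\zeta_2-\zeta_1$ is not special to the symmetric case (the sum $\zeta_1+\zeta_2$ increments by one per step for any $\beta_1,\beta_2$) and is not actually used either by you or by the paper; the working coordinate throughout is $\zeta_2-\zeta_1^\alpha$ (resp.\ $\zeta_1-\zeta_2^{1/\gamma}$).
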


\begin{theorem}
\label{main-asym}
Let  $1<\beta_1<\beta_2$, and 
let $\zeta(t)=(\zeta_1(t), \zeta_2(t))$ be the  BB model with reflection 
in the domain $Q_{\varphi, \psi}$, where 
$\varphi(x)=x^{\alpha}$ and $\psi(x)=x^{\gamma}$ for some 
$\alpha, \gamma: 0<\alpha<\alpha_{cr}<\gamma$.
Then, for any $\zeta(0)=\zeta\in Q_{\varphi, \psi}$
  there  exist integers  $k_1=k_1(\alpha, \beta_1, \beta_2)$ and 
$k_2=k_2(\gamma, \beta_1, \beta_2)$, such that 
\begin{align}
\label{gamma<1}
\P_{\zeta}\left(A_{\alpha, k_1}\right)+\P_{\zeta}\left(B_{\gamma}\right)&=1,\quad\text{if}\quad 
\alpha_{cr}<\gamma<1,\\
\label{gamma=1}
\P_{\zeta}\left(A_{\alpha, k_1}\right)+\P_{\zeta}\left(E_{k_2}\right)&=1,\quad\text{if}\quad 
\gamma=1,\\
\label{gamma>1}
\P_{\zeta}\left(A_{\alpha, k_1}\right)+
\P_{\zeta}\left(\widetilde{A}_{\gamma, k_2}\right)&=1,\quad\text{if}\quad \gamma>1,
\end{align}
where  events $A_{\alpha, k}$, $\widetilde{A}_{\gamma, k}$, $E_k$ and $B_{\gamma}$ 
are  defined in~\eqref{Ak}-\eqref{Bgamma}, 
 $k_1$ and $k_2$ are integer solutions of  the following inequalities
\begin{equation}
\label{k1}
\frac{\alpha}{(\beta_2-1)(\alpha_{cr}-\alpha)}<k_1\leq 1+\frac{\alpha}{(\beta_2-1)(\alpha_{cr}-\alpha)}
\end{equation}
\begin{equation}
\label{k2}
\frac{1}{(\beta_2-1)(\gamma-\alpha_{cr})}<k_2\leq 1+\frac{1}{(\beta_2-1)(\gamma-\alpha_{cr})}.
\end{equation}
 \end{theorem}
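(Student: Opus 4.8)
The plan is to analyze the BB model with reflection by studying the discrete ``height'' process measuring the distance from the trajectory to each boundary curve, and to show that this height process is eventually confined to an interval of the stated width. The key observation is that, in the interior of the wedge, the process $\zeta(t)$ evolves exactly like the unconstrained DTMC $X(t)$ with transition probabilities~\eqref{prob1}, and by Theorem~\ref{T0} the monopolistic regime would realize almost surely if the process were left unconstrained; the reflection mechanism is precisely what keeps it near a boundary. Near the lower boundary $y=x^{\alpha}$ the ``dangerous'' drift is the one pushing $\zeta_1$ to infinity (so that $\zeta_2-\zeta_1^{\alpha}$ wants to grow), while near the upper boundary $y=x^{\gamma}$ the dangerous drift pushes $\zeta_2$ up. So I would treat the two boundaries separately and symmetrically, and then glue the two analyses together with a zero-one type argument.

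First I would set up the near-boundary analysis. Fix the lower boundary; introduce the discrete height $H(t)=\zeta_2(t)-\zeta_1^{\alpha}(t)$ (ignoring floor/ceiling as the paper allows). When $\zeta_1=x$ is large and $H$ is a small integer $h$, a jump up increases $H$ by roughly $1$, a jump to the right decreases $H$ by roughly $\alpha x^{\alpha-1}$, and the jump-up probability is $\frac{y^{\beta_2}}{x^{\beta_1}+y^{\beta_2}}$ with $y\approx x^{\alpha}$, so it is of order $x^{\alpha\beta_2-\beta_1}$. The reflection at the lower boundary forces a jump up whenever the process would otherwise fall below the curve, which amounts to a reflecting barrier at $h=0$ (more precisely, $h$ is pushed back to near $1$). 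Balancing the expected increments of $H$ against those of $x$, and using the Lyapunov/semimartingale technology for non-homogeneous random walks (à la Lamperti), I expect to find that $H(t)$ behaves like a random walk on $\{0,1,2,\dots\}$ with a reflecting barrier at $0$ and a drift that becomes negative once $h$ exceeds a critical threshold. That threshold is exactly $k_1$: the inequality~\eqref{k1} should come out of comparing the expected change in $\zeta_2$ per unit change in $\zeta_1^{\alpha}$, i.e. from the condition $h > \frac{\alpha}{(\beta_2-1)(\alpha_{cr}-\alpha)}$ ensuring that the probability of climbing is too small to sustain height $h$. I would package this as: (i) a recurrence lemma showing the process returns to the lower boundary infinitely often from heights below $k_1$, hence $\liminf(\zeta_2-\zeta_1^{\alpha})=0$ on the relevant event; (ii) a transience-type lemma showing heights above $k_1$ are visited only finitely often, hence $\limsup(\zeta_2-\zeta_1^{\alpha})\le k_1$; (iii) a lemma showing height exactly $k_1$ is attained infinitely often. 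Steps (i)--(iii) are what the series of lemmas in Sections~\ref{lower}--\ref{upper} are meant to establish, so I would simply invoke them here.

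Second I would do the symmetric analysis at the upper boundary. By the substitution $x\leftrightarrow y$, $\beta_1\leftrightarrow\beta_2$, and writing $\zeta_1-\zeta_2^{1/\gamma}$ (for $\gamma\ge 1$) or $\zeta_1^{\gamma}-\zeta_2$ (for $\gamma<1$), the same Lamperti-type computation gives the threshold $k_2$ of~\eqref{k2}, with the degenerate width-$1$ outcome $B_\gamma$ when $\gamma<1$ and the special ``zigzag between bisector and $y=x-k_2$'' outcome $E_{k_2}$ when $\gamma=1$; these distinctions are purely about whether the relevant boundary coordinate grows polynomially faster, slower, or at the same rate as the other, which changes the geometry of the confining strip but not the underlying one-dimensional recurrence argument. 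Having established that near each boundary the height process is eventually confined and recurrent at the stated level, the final step is a dichotomy: the process $\zeta(t)$ cannot be near both boundaries, since $\varphi(x)<\psi(x)$ with a gap that grows, so with probability one it is eventually trapped near exactly one of them. I would make this rigorous by a standard argument: define the escape events, show each height process restricted to its boundary is a recurrent Markov-like chain once the other boundary becomes unreachable, and argue via the strong Markov property and Borel--Cantelli that the probability of oscillating between the two boundaries infinitely often is zero (this uses that a single excursion across the whole wedge has probability tending to zero as the scale grows, exactly because of Theorem~\ref{T0}-type explosion on the unconstrained part). Adding $\P_\zeta(\text{near lower, confined to width }k_1)$ and $\P_\zeta(\text{near upper, confined to width }k_2)$ then gives the claimed identities~\eqref{gamma<1}--\eqref{gamma>1}, with $k_1,k_2$ the unique integers in~\eqref{k1}--\eqref{k2}.

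The main obstacle I expect is the near-boundary recurrence/transience analysis with the correct sharp constant, i.e. pinning down that the drift of the height process changes sign exactly at $k_1$ (resp.\ $k_2$) and not merely ``somewhere in a bounded range.'' This requires a careful second-order expansion of $\Ee[\Delta(\zeta_2-\zeta_1^{\alpha})\mid \zeta]$ keeping the $x^{\alpha-1}$ term, controlling the error from the floor/ceiling rounding and from the $o(x^{\alpha})$ slack in the boundary, and then running a Lyapunov function of the form $f(h)$ (likely exponential or geometric in $h$) to separate the cases $h\le k_1-1$, $h=k_1$, $h\ge k_1+1$. A secondary difficulty is handling the reflection steps themselves, which are deterministic and can in principle move the process a macroscopic vertical distance if $\varphi(x+1)-\varphi(x)$ were large — but since $\varphi(x)=x^{\alpha}$ with $\alpha<1$ this increment is $o(1)$, so a reflection moves the height by $O(x^{\alpha-1})$, which is negligible and can be folded into the error terms. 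Once these estimates are in place, the gluing dichotomy is comparatively routine.
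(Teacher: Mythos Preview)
Your high-level architecture — analyze the process near each boundary separately, then glue via a dichotomy showing the process cannot oscillate between boundaries — is exactly the paper's scheme, and your invocation of the Sections~\ref{lower}--\ref{upper} lemmas together with the unconstrained explosion result is what the paper does (it cites Theorem~\ref{T1a}, not~\ref{T0}, to get that the process hits a boundary, and then Lemmas~\ref{Cor-A1}, \ref{0+k}, \ref{gamma-1}, \ref{gamma-k}, \ref{Cor2-bisector}, \ref{LT4ba} for the near-boundary confinement).

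However, your proposed mechanism for \emph{why} the width equals $k_1$ is mistaken, and if you actually tried to run the Lamperti/Lyapunov argument you describe, it would not produce the threshold. You claim the drift of $H(t)=\zeta_2(t)-\zeta_1^{\alpha}(t)$ ``becomes negative once $h$ exceeds a critical threshold'' $k_1$. But near the lower boundary the jump-up probability is $\sim x^{\alpha\beta_2-\beta_1}$ and the rightward decrement of $H$ is $\sim \alpha x^{\alpha-1}$; since $\alpha<\alpha_{cr}$ means $\alpha\beta_2-\beta_1<\alpha-1$, the drift of $H$ is dominated by $-\alpha x^{\alpha-1}$ \emph{regardless of} $h$. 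There is no sign change in the drift at any finite height, so an exponential-in-$h$ Lyapunov function cannot distinguish $h\le k_1$ from $h>k_1$.

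The paper obtains $k_1$ by an entirely different mechanism: it partitions time into blocks on which $\zeta_1^{\alpha}$ increases by a fixed amount $r$ (so each block has length $\sim x^{1-\alpha}$), and shows (Lemma~\ref{PR-binom}) that the number of jumps up per block is approximately Poisson with mean $\lambda_x=x^{\alpha\beta_2-\beta_1+1-\alpha}$. The integer $k_1$ is then determined by Borel--Cantelli: $\sum_n \lambda_n^{k}<\infty$ iff $k>\frac{\alpha}{(\beta_2-1)(\alpha_{cr}-\alpha)}$, so blocks with $\ge k_1$ jumps up occur only finitely often, while blocks with exactly $k_1-1$ jumps up occur infinitely often (Lemma~\ref{2k-1+eps}). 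The width $k_1$ is thus a \emph{fluctuation} threshold (how many rare upward jumps can pile up in one block), not a drift threshold. Your proof sketch would go through only because you defer to the paper's lemmas; your own proposed calculation would not recover~\eqref{k1}.
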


\begin{remark}
{\rm 
In other words, both Theorem~\ref{main-sym} and Theorem~\ref{main-asym}
 state that, with probability one,   
the DTMC $\zeta(t)$ eventually confines to a strip of a finite width 
along either the upper, or the lower boundary of the process's domain. 
Given the model parameters, the width  of the limit strip can be calculated.

Moreover, 
the process  fluctuates within this limit strip "touching" both boundaries of the strip infinitely many times. 
 The  main difference between these two theorems is that 
in the symmetric case (Theorem~\ref{main-sym})  the critical curve coincides with the bisector.
In this case events~\eqref{Ek} and~\eqref{Bgamma} are impossible. 
}
\end{remark}

\begin{remark}
{\rm 
Note that the inequality~\eqref{k2} for determining $k_2$ in~\eqref{gamma=1}
can be simplified. Indeed, if $\gamma=1$, then
$\frac{1}{(\beta_2-1)(1-\alpha_{cr})}=\frac{1}{\beta_2-\beta_1}$, and we get the 
inequality 
$\frac{1}{\beta_2-\beta_1}<k\leq 1+\frac{1}{\beta_2-\beta_1}$ for determining $k_2$. 
For example, if $\beta_2-\beta_1>1$, then $k_2=1$; if $\beta_2-\beta_1=1$, then $k_2=2$, and so on.
}
\end{remark}

\section{Pure birth processes with power law rates}
\label{prelim1}

For the reader's convenience we provide in this section 
some  facts about pure birth processes with continuous time.
The material of the section will used 
to obtain Theorems~\ref{T0},~\ref{T1a} and~\ref{T1b}.

We say that a random variable $\xi$ has exponential distribution with parameter $\lambda>0$
and write $\xi \sim Exp(\lambda)$, if 
$\P(\xi>x)=e^{-\lambda x}$ for $x\geq 0$.
Let $Z(t)=(Z(t),\, t\in \R_{+})$ be a continuous time 
 pure birth process on $\Z_+$ with birth rates $\{\lambda_i>0,\, i\geq 0\}$, 
that is, $Z(t)$ is  a continuous time Markov chain (CTMC)  on $\Z_{+}$ evolving as follows.
The process stays at state $i$ for a period of time  given by a random variable 
  $\xi_i\sim Exp(\lambda_i)$
and then jumps to state $i+1$ with probability one.
 Random variables $(\xi_i,\, i\geq 0)$  are assumed to be  independent.
The random variable 
$\displaystyle{T_k=\sum_{i=k}^{\infty}\xi_i}$
is called  the    time to explosion of the birth process started at $Z(0)=k$.
It is known (e.g. \cite{Feller}) that 
$\P(T_0<\infty)=1$  
 if and only if $\displaystyle{\sum_{i=0}^{\infty}\lambda_i^{-1}<\infty}$.
When $\P(T_0<\infty)=1$ (and, hence, $\P(T_k<\infty)=1$  for $k\in\Z_{+}$)
 we say that the process is explosive with probability $1$.
For the explosion time $T_k$  we  have that 
\begin{equation}
\label{conv2}
\E(T_k)=\sum_{i=k}^{\infty}\E(\xi_i)=\sum_{i=k}^{\infty}\lambda_i^{-1}\quad\text{and}\quad
\Var(T_k)=\sum_{i=k}^{\infty}\Var(\xi_i)=\sum_{i=k}^{\infty}\lambda_i^{-2}.
\end{equation}

Consider a pure birth process  birth rates $\lambda_k=k^{\beta},\, k\geq 1$, where $\beta>1$.
Given the above,  such a  birth process is explosive with probability $1$, and 
 the mean and the variance of the  explosion time $T_k$  in this case are as follows
\begin{align}
\label{mean-var}
m_k&:=\E(T_k)\sim \frac{k^{-\beta+1}}{\beta-1}\quad\text{and}\quad 
s_k^2:=\Var(T_k)\sim  \frac{k^{-2\beta+1}}{2\beta-1}.
\end{align}
Therefore, 
\begin{equation}
\label{mean-var}
\frac{s_k^2}{m_{k}^2}=\frac{O(1)}{k}\to 0, \quad\text{as}\quad k\to \infty,
\end{equation}
and, by Chebyshev's inequality, 
\begin{equation}
\label{cheb1}
\P\left(|T_{k}-\E(T_{k})|\geq \delta\cdot\E(T_k)\right)\leq \frac{O(1)}{\delta^2 k}
\quad\text{for all}\quad\delta>0.
\end{equation}
Rewriting $T_k=\sum\limits_{i=k}^{k^2-1}\xi_i+ \eta_k$, where 
$\eta_k=\sum\limits_{i=k^2}^{\infty}\xi_i$,
 gives the  scheme of series 
$\left(\tilde\xi_{k,i},\, i=1,...,k^2\right)$,\, $k\geq 2$, where 
$\tilde\xi_{k,i}=\xi_i$ for $i=k,..., k^2-1$ and $\tilde\xi_{k,k^2}=\eta_k$, that satisfies 
the Lyapunov's conditions of the Central Limit Theorem.
Therefore, the  random variable 
$\displaystyle{\frac{T_k-m_k}{s_k}}$ converges in distribution 
to the standard normal distribution, as $k\to \infty$.
Moreover, since 
$$\E(\xi^3)=6\lambda^{-3}\quad\text{and}\quad
\E\left(|\xi-\E(\xi)|^3\right)\leq \frac{7}{\lambda^3}\quad\text{for}\quad
\xi\sim Exp(\lambda),$$
we obtain the following for  the  pure birth process with polynomial   rates 
\begin{align}
r_{k}&:=\sum_{i=k}^{\infty}\E\left(|\xi_i-\E(\xi_i)|^3\right)\leq 7\sum_{i=k}^{\infty}i^{-3\beta}
\leq  C_3\frac{k^{-3\beta+1}}{3\beta-1}.
\label{r3}
\end{align}
Combining this with~\eqref{mean-var} gives 
$\displaystyle{\frac{r_{k}}{s_{k}^{3}}\leq \frac{O(1)}{\sqrt{k}}}$, as $k\to \infty$.
Therefore, by  Berry-Esseen theorem (e.g. Theorem 2 in~\cite{Feller}, Section 5, Chapter XVI),
we have  that 
\begin{equation}
\label{Berry}
\sup_{t\in\R}\left|\P\left(\frac{T_k-m_{k}}{s_{k}}\leq t\right)-\Phi(t)\right|
\leq \frac{O(1)}{\sqrt{k}},
\end{equation}
where $\Phi$ is the cumulative distribution function of the standard normal distribution.
\begin{remark}
{\rm 
For comparison, consider a pure birth process with exponential transition rates $\lambda_k=e^{\beta k}$, where
 $\beta>0$. Then, a direct computation gives that  $s_k^2=\Var(T_k)\sim C_2 e^{-2\beta k}$ and 
$r_k\leq C_3e^{-3\beta k}$. Therefore,  $\frac{r_k}{s_k^{3}}\sim C$, and, hence,
 the normal approximation does not apply in this case.
}
\end{remark}

\begin{remark}
\label{cr-curve-sense}
{\rm
The critical curve~\eqref{curve} 
 is obtained by equating  the mean explosion times of the corresponding continuous time 
  birth processes with polynomial rates.
 Note that  the critical  curve lies   below  the curve 
$y=x^{\frac{\beta_1}{\beta_2}}$ obtained by equating the 
 probability  of the  jump up to the probability of the jump to the right. 
 }
 \end{remark}

 Given continuous 
time birth processes $(Z_i(t),\, t\in \R_{+})$, $i=1,2$,  we denote by  $ T_{z,i}$ the explosion time
of the process $Z_i(t)$ started at $z_i\in \N$ and 
define   $V_{x, y}=T_{x,1}-T_{y,2}$ for $x, y\in \N$.

\section{Proofs of Theorems~\ref{T0},~\ref{T1a} and~\ref{T1b}}
\label{proofs-no-reflection}

\subsection{Proof of Theorem~\ref{T0}}
\label{proof-T0}

Theorem~\ref{T0} basically  follows  from the well known 
embedding argument   known  as  Rubin's construction (\cite{Davis}). 
We provide the proof just for the sake of completeness.

 The DTMC $X(t)=(X_1(t), X_2(t))$
 with transition probabilities~\eqref{prob1} can be regarded as the embedded Markov chain 
 for  a pair of independent continuous time pure birth 
  processes 
$Z_1(t)$ and $Z_2(t)$ with 
  the polynomial  birth  rates  $k^{\beta_1}$,\, $k\in \Z_{+}$, and
  $k^{\beta_2}$,\, $k\in \Z_{+}$, respectively.
Let $ T_{z,i}$
be  the explosion time of the birth process $Z_i(t)$ started at $z\in \Z_{+}$, $i=1,2$ and recall
events $A_i$ defined in~\eqref{A1}.
Observe  that  
$\P_{\bx}\left(A_1\right)
=\P\left(T_{x,1}<T_{y,2}\right)$ and $\P_{\bx}\left(A_2\right)
=\P\left(T_{x,1}>T_{y,2}\right)$  for $\bx=(x,y)$.
If both $\beta_1>1$ and $\beta_2>1$, then 
   $\P\left(T_{x,1}<\infty\right)=\P\left(T_{y,2}<\infty\right)=1.$
Random variables $T_{x,1}$ and $T_{y,2}$ are independent 
and have absolutely continuous distributions. Therefore, 
 $\P\left(T_{x,1}<T_{y,2}\right)+
\P\left(T_{x,1}>T_{y,2}\right)=1,$
 and, hence, 
 $\displaystyle{\P_{\bx}\left(A_1\right)+\P_{\bx}\left(A_2\right)=1,}$
 as claimed in Part 1) of Theorem~\ref{T0}.
If $\beta_1\leq 1$ and $\beta_2>1$, then 
$\P\left(T_{x,1}=\infty\right)=\P\left(T_{y,2}<\infty\right)=1$, 
and, hence,  $ \P_{\bx}\left(A_2\right)=1$,  as claimed in Part 2) of Theorem~\ref{T0}.

\subsection{Proof of Theorem~\ref{T1a}}
\label{proof-T1a}

The proof of Theorem~\ref{T1a} is rather straightforward and  is based 
on the fact that the explosion times 
can be approximated by their expectations, as  it follows from~\eqref{cheb1}.
We provide main details for the sake of completeness, and prove 
 only Part 1) of the theorem, as Part 2) can be proved similarly.

If   
 $y(x)=\nu x^{\alpha_{cr}}+o(x^{\alpha_{cr}})$, where $\nu<\nu_{cr}$, then, 
by~\eqref{mean-var}, 
$$\frac{\E(T_{y(x),2})}{\E(T_{x,1})}\sim
\left(\frac{\nu_{cr}}{\nu}\right)^{\beta_2-1}>1.$$ 
Therefore, 
$(1+\delta)\E(T_{x,1})<(1-\delta)\E(T_{y(x),2})$
 for some  $\delta>0$ and sufficiently large $x$, and, hence, 
the  event
$$\left\{T_{x,1}<(1+\delta)\E(T_{x,1})\right\}\bigcap\left\{T_{y(x),2}>(1-\delta)\E(T_{y(x),2}
\right\}$$ 
implies the event $\{T_{x,1}<T_{y(x),2}\}$.
By~\eqref{cheb1},  
$$\P\left(T_{x,1}<(1+\delta)\E(T_{x,1})\right)
\geq 1-\P\left(|T_{x,1}-\E(T_{x,1})|\geq 
  \delta\E(T_{x,1}) \right)\geq 1-Cx^{-1}\to 1,$$
 as $x\to \infty$.
Similarly, 
$\P\left((1-\delta)\E(T_{y(x),2})<T_{y(x),2}\right)\to 1$, 
as $x\to\infty$.
Then, by independence of explosion times, 
\begin{equation*}
\P\left(T_{x,1}<T_{y(x),2}\right)\geq 
[\P\left(T_{x,1}<(1+\delta)\E(T_{x,1})\right)][\P\left(T_{y(x),2}>(1-\delta)
\E(T_{y(x),2})\right)]\to 1,
\end{equation*}
 as $x\to\infty$,
and, hence, $\lim_{\zeta\to\infty}\P_{(x,y(x))}\left(A_1\right)=1$,
as claimed.

If $y(x)=\nu x^{\alpha}+o(x^{\alpha})$ for $\alpha<\alpha_{cr}$,
then, $\alpha(1-\beta_2)-(1-\beta_1)>0$. By 
by equation~\eqref{mean-var} we obtain that   
$$\frac{\E(T_{y(x),2})}{\E(T_{x,1})}\sim  O(1)x^{\alpha(1-\beta_2)-(1-\beta_1)}
\to \infty,$$
as $x\to \infty$.
Therefore,     
$(1+\delta)\E(T_{x,1})<(1-\delta)\E(T_{y(x),2})$ for some $\delta>0$ and 
 sufficiently large $x$, and the claim of the theorem in this case follows
again from independence of the explosion times and bound~\eqref{cheb1}.
 
\subsection{Proof of Theorem~\ref{T1b}}
\label{proof-T1b}

We are going to use  notations introduced in Sections~\ref{prelim1} and~\ref{proof-T1a}.
\begin{proposition}
\label{P-on-curve}
If  $X(0)=(x, y(x))$, where 
$y(x)=\nu_{cr}x^{\alpha_{cr}}+\mu x^{\delta}+o(x^{\delta})$
for $\delta\in [0, \alpha_{cr})$,
 then 
$$\frac{\E(V_{x,y(x)})}{\sqrt{\Var(V_{x,y(x)})}}\sim -\rho x^{\delta-\frac{\alpha_{cr}}{2}},$$
where $\rho$ is defined in~\eqref{rho}.
 \end{proposition}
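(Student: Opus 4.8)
\emph{Proof proposal.} The plan is to reduce the claim to two elementary asymptotic computations — one for the mean and one for the variance of $V_{x,y(x)}$ — via the embedding used in the proof of Theorem~\ref{T0}. Under that embedding $V_{x,y(x)}=T_{x,1}-T_{y(x),2}$ is the difference of two \emph{independent} explosion times, of the birth processes with rates $k^{\beta_1}$ and $k^{\beta_2}$ respectively; hence $\E(V_{x,y(x)})=\E(T_{x,1})-\E(T_{y(x),2})$ and $\Var(V_{x,y(x)})=\Var(T_{x,1})+\Var(T_{y(x),2})$, and it suffices to find sharp first-order asymptotics of these quantities and then divide.

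First I would treat the mean. The constants $\alpha_{cr},\nu_{cr}$ in~\eqref{alpha-gamma} are designed exactly so that the \emph{leading} terms of the two mean explosion times agree on the critical curve: by~\eqref{mean-var} and the identities $\alpha_{cr}(1-\beta_2)=1-\beta_1$ and $\nu_{cr}^{1-\beta_2}=\alpha_{cr}^{-1}$ one gets $\frac{(\nu_{cr}x^{\alpha_{cr}})^{1-\beta_2}}{\beta_2-1}\sim\frac{x^{1-\beta_1}}{\beta_1-1}\sim\E(T_{x,1})$. So I would split $\E(V_{x,y(x)})=\big[\E(T_{x,1})-\E(T_{\nu_{cr}x^{\alpha_{cr}},2})\big]+\big[\E(T_{\nu_{cr}x^{\alpha_{cr}},2})-\E(T_{y(x),2})\big]$, bound the first bracket by $o\big(x^{\delta-\alpha_{cr}\beta_2}\big)$ using the Euler--Maclaurin correction to $\sum_{j\ge k}j^{-\beta_i}$ together with $\delta\ge 0$, and evaluate the second bracket by a Riemann-sum estimate: since $y(x)-\nu_{cr}x^{\alpha_{cr}}=\mu x^{\delta}+o(x^{\delta})$ has smaller order than $\nu_{cr}x^{\alpha_{cr}}$ (because $\delta<\alpha_{cr}$), the partial sum $\sum_{j=\nu_{cr}x^{\alpha_{cr}}}^{y(x)-1}j^{-\beta_2}$ is $\sim\big(\nu_{cr}x^{\alpha_{cr}}\big)^{-\beta_2}\mu x^{\delta}=\nu_{cr}^{-\beta_2}\mu\,x^{\delta-\alpha_{cr}\beta_2}$ (with the obvious sign bookkeeping when $\mu<0$). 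Rewriting $\nu_{cr}^{-\beta_2}=\tfrac{\beta_2-1}{(\beta_1-1)\nu_{cr}}$ and $\alpha_{cr}\beta_2=\beta_1-1+\alpha_{cr}$ then yields $\E(V_{x,y(x)})\sim\tfrac{(\beta_2-1)\mu}{(\beta_1-1)\nu_{cr}}\,x^{\,1-\beta_1+\delta-\alpha_{cr}}$.

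For the variance, \eqref{mean-var} gives $\Var(T_{x,1})\sim\frac{x^{1-2\beta_1}}{2\beta_1-1}$ and $\Var(T_{y(x),2})\sim\frac{(\nu_{cr}x^{\alpha_{cr}})^{1-2\beta_2}}{2\beta_2-1}=\frac{\nu_{cr}^{1-2\beta_2}}{2\beta_2-1}\,x^{\,2-2\beta_1-\alpha_{cr}}$, using $\alpha_{cr}(1-2\beta_2)=2-2\beta_1-\alpha_{cr}$. Since $2-2\beta_1-\alpha_{cr}\ge 1-2\beta_1$ (strictly when $\beta_1<\beta_2$), the contribution of $T_{y(x),2}$ is the dominant one, so $\sqrt{\Var(V_{x,y(x)})}\sim\big(\tfrac{\nu_{cr}^{1-2\beta_2}}{2\beta_2-1}\big)^{1/2}x^{\,1-\beta_1-\alpha_{cr}/2}$. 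Dividing the mean asymptotic by this, the powers of $x$ collapse to $x^{\,\delta-\alpha_{cr}/2}$ and the powers of $\nu_{cr}$ collapse to $\nu_{cr}^{-\beta_2-(1-2\beta_2)/2}=\nu_{cr}^{-1/2}$, leaving the constant $\mu\big(\tfrac{2\beta_2-1}{\nu_{cr}}\big)^{1/2}=-\rho$ with $\rho$ as in~\eqref{rho}; this is exactly the asserted relation.

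The one step needing genuine care is controlling the error terms in the mean: because the leading contributions cancel by construction of the critical curve, one is effectively extracting the \emph{second}-order term of $\E(T_{x,1})-\E(T_{y(x),2})$, so one must verify that the Euler--Maclaurin corrections $O(x^{-\beta_1})$ and $O(x^{-\alpha_{cr}\beta_2})$, the $o(x^{\delta})$ error inside $y(x)$, and the error in replacing a partial sum of $j^{-\beta_2}$ by its Riemann approximation are all $o\big(x^{\delta-\alpha_{cr}\beta_2}\big)$; this is where $0\le\delta<\alpha_{cr}\le 1$ is used. (When $\delta=0$ only the \emph{order} of $\E(V_{x,y(x)})$, not its exact constant, is pinned down unambiguously — an $O(1)$ effect of the same nature as our suppression of floor/ceiling functions — which is still enough for the subsequent applications of the proposition.) Everything else is routine manipulation of the identities in~\eqref{alpha-gamma}.
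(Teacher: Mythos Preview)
Your proposal is correct and follows essentially the same route as the paper: compute $\E(V_{x,y(x)})$ and $\Var(V_{x,y(x)})$ separately via~\eqref{mean-var}, note that the leading terms of the two mean explosion times cancel by the very definition of $\alpha_{cr},\nu_{cr}$, extract the next-order term, show that the variance is dominated by $\Var(T_{y(x),2})$, and divide. Your organisation of the mean computation (splitting off the critical-curve point and using a Riemann-sum estimate for the remaining partial sum) is only cosmetically different from the paper's direct binomial expansion of $(\nu_{cr}x^{\alpha_{cr}}+\mu x^{\delta})^{1-\beta_2}$, and your explicit tracking of the Euler--Maclaurin error terms --- including the caveat at $\delta=0$ --- is in fact more careful than the paper's own argument, which simply chains $\sim$ symbols without isolating those errors.
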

\begin{proof}
Using  equation~\eqref{mean-var} we obtain that
\begin{align*}
\E(V_{x,y(x)})&=\E(T_{x,1})-\E(T_{y(x),2})\sim \frac{x^{1-\beta_1}}{\beta_1-1}-
\frac{\left(\nu_{cr}x^{\alpha_{cr}}+
\mu x^{\delta}+o(x^{\delta})\right)^{1-\beta_2}}{\beta_2-1}
\\
&\sim\frac{x^{1-\beta_1}}{\beta_1-1}-\frac{\nu_{cr}^{1-\beta_2}x^{\alpha_{cr}(1-\beta_2)}}{\beta_2-1}
\left(1+\frac{\mu x^{\delta-\alpha_{cr}}}{\nu_{cr}}\right)^{1-\beta_2}\\
&\sim \frac{x^{1-\beta_1}}{\beta_1-1}-\frac{\nu_{cr}^{1-\beta_2}x^{\alpha_{cr}(1-\beta_2)}}{\beta_2-1}
\left(1+\frac{\mu(1-\beta_2)x^{\delta-\alpha_{cr}}}{\nu_{cr}}\right)\\
&\sim \frac{x^{1-\beta_1}}{\beta_1-1}-
\frac{\nu_{cr}^{1-\beta_2}x^{\alpha_{cr}(1-\beta_2)}}{\beta_2-1}
+\frac{\mu}{\nu_{cr}^{\beta_2}}x^{\delta-\alpha_{cr}\beta_2}.
\end{align*}
Noting that
$$\frac{x^{1-\beta_1}}{\beta_1-1}-
\frac{\nu_{cr}^{1-\beta_2}x^{\alpha_{cr}(1-\beta_2)}}{\beta_2-1}=0$$
we get the following approximation 
\begin{equation}
\label{Exp-Diff}
\E(V_{x,y(x)})=
\frac{\mu}{\nu_{cr}^{\beta_2}}x^{\delta-\alpha_{cr}\beta_2}.
\end{equation}
Further, by equation~\eqref{mean-var} we have that 
\begin{align*}
\Var(T_{x,1)})&\sim \frac{x^{-2\beta_1+1}}{2\beta_1-1}
\quad\text{and}\quad
\Var(T_{y(x),2})\sim 
\frac{\left(\nu_{cr}x^{\alpha_{cr}}\right)^{-2\beta_2+1}}
{2\beta_2-1}\left(1+\frac{\mu}{\nu_{cr}}x^{\delta-\alpha_{cr}}\right)^{-2\beta_2+1}.
\end{align*}
Observe that  
\begin{equation*}
\alpha_{cr}=\frac{\beta_1-1}{\beta_2-1}<\frac{m\beta_1-1}{m\beta_2-1}\quad \Longleftrightarrow
\quad
(-m\beta_1+1)-\alpha_{cr}(-m\beta_2+1)<0\quad\text{for}\quad m>1.
\end{equation*}
Therefore, 
 $
\Var(V_{x,y(x)})=\Var(T_{x,1})+\Var(T_{y(x),2})
\sim \Var(T_{y(x),2}),
$
and
\begin{equation}
\label{Var-Diff}
\sqrt{\Var(V_{x,y(x)})}
\sim \sqrt{\Var(T_{y(x),2})}\sim
 \frac{\nu_{cr}^{\frac{1}{2}-\beta_2}x^{\alpha_{cr}
\left(\frac{1}{2}-\beta_2\right)}}{\sqrt{2\beta_2-1}}.
 \end{equation}
Combining equations~\eqref{Exp-Diff} and \eqref{Var-Diff} gives the proposition.
\end{proof}

Further, observe that 
$\lim_{x\to\infty}\P\left(A^{\left(1\right)}_{x,y(x)}\right)=\lim_{x\to\infty}\P(V_{x,y(x)}<0)$
and
\begin{equation}
\label{L2}
\P(V_{x, y(x)}<0)=\P\left(\widetilde V_{x, y(x)}<-\frac{\E(V_{x, y(x)})}{\sqrt{\Var(V_{x, y(x)})}}\right)
\sim \P\left(\widetilde V_{x, y(x)}<\rho x^{\alpha-\frac{\alpha_{cr}}{2}}\right),
\end{equation}
where 
$\widetilde V_{x, y(x)}=\frac{V_{x, y(x)}-\E(V_{x, y(x)})}{\sqrt{\Var(V_{x, y(x)})}}.$
The normal approximation for explosion times 
discussed above yields the normal approximation for the random variable
$V_{x, y(x)}$.
In particular,   we have (similarly to~\eqref{Berry}), that
 \begin{equation}
 \label{norm2}
 |\P(\widetilde V_{x, y(x)}\leq z)-\Phi(z)|
 \leq O(1)x^{-\frac{\alpha_{cr}}{2}}\quad\text{for}\quad z\in \R.
 \end{equation}
Summarizing the above, we obtain the results listed below.
\begin{enumerate}
\item[i)] If $\alpha=\frac{\alpha_{cr}}{2}$, then 
 $\lim_{x\to\infty}\P\left(A^{\left(1\right)}_{x,y(x)}\right)=
  \Phi(\rho).$
\item[ii)]  If $\delta<\frac{\alpha_{cr}}{2}$, then 
$
 \lim_{x\to\infty}\P\left(A^{\left(1\right)}_{x,y(x)}\right)=
 \lim_{x\to\infty}\Phi\left(\rho x^{\delta-\frac{\alpha_{cr}}{2}}\right)
=\Phi(0)=\frac{1}{2}.$
\item[iii)] If $\frac{\alpha_{cr}}{2}<\delta<\alpha_{cr}$ 
\begin{enumerate}
\item and $\gamma<0$, then 
$\rho x^{\delta-\frac{\alpha_{cr}}{2}}\to \infty$, as $x\to \infty$, and, hence, 
$$
 \lim_{x\to\infty}\P\left(A^{\left(1\right)}_{x,y(x)}\right)=
 \lim_{x\to\infty}\Phi\left(\rho x^{\delta-\frac{\alpha_{cr}}{2}}\right)=1;$$
\item and $\gamma>0$, then 
$\rho x^{\delta-\frac{\alpha_{cr}}{2}}\to-\infty$, as $x\to \infty$, and, hence, 
$$
 \lim_{x\to\infty}\P\left(A^{\left(1\right)}_{x,y(x)}\right)=
 \lim_{x\to\infty}\Phi\left(\rho x^{\delta-\frac{\alpha_{cr}}{2}}\right)
=0.$$
\end{enumerate}
\end{enumerate} 
Theorem~\ref{T1b} now follows from items i)-iii).

\section{Proofs of results for the model with reflection}


In this section we prove a series of lemmas that 
describe  the long term behavior of the DTMC $\zeta(t)$  in the case, when 
the process starts  near either the lower, or   the upper boundary.  
The case of the lower  boundary is treated  in detail  in Section~\ref{lower}.
The case of the upper boundary is considered in Section~\ref{upper}.
The main results for the model with reflection, i.e. 
Theorems~\ref{main-sym} and~\ref{main-asym}, will follow 
from the series of  lemmas and explosiveness of the BB model with asymmetric feedback, see
 Section~\ref{proof-reflection-theorems} for details.

\subsection{The case of   the  lower boundary}
\label{lower}

Start with some definitions and auxiliary facts.

\begin{definition}
\label{definitions1}
{\rm 
Given $\beta_1, \beta_2: 1<\beta_1\leq \beta_2$ 
define 
\begin{equation}
\label{alpha-k}
\alpha_k=\frac{\beta_1-1}{\beta_2-1+\frac{1}{k}}\quad\text{for}\quad k\geq 1
\quad\text{and}\quad \alpha_0=0.
\end{equation}
}
\end{definition}
Note  that 
\begin{equation}
\label{alpha_k<alpha_k+1}
\alpha_k<\alpha_{k+1}\quad\text{and}\quad \lim\limits_{k\to \infty}\alpha_k=\alpha_{cr},
\end{equation}
where $\alpha_{cr}$ is defined in~\eqref{alpha-gamma}.
\begin{definition}
\label{stripes}
{\rm 
Let the lower boundary is given by the curve $y=x^{\alpha}$, for some 
$0<\alpha<\alpha_{cr}$.
Given $r>0$ define the set (stripe along the lower boundary)
\begin{equation}
\label{U}
U_{\alpha, r}=
\{(x, y)\in\N^2: x^{\alpha}\leq  y < x^{\alpha}+r\}.
\end{equation}
}
\end{definition}
If $0<\alpha<\alpha_{cr}$, then 
 $\alpha\beta_2-\beta_1<0$, which implies the following approximation 
\begin{equation}
\label{p-up}
\P_{\zeta}(\zeta_1(1)=x,\, \zeta_2(1)=y+1)\sim\frac{x^{\alpha\beta_2}}
{x^{\beta_1}+x^{\alpha\beta_2}}\sim x^{\alpha\beta_2-\beta_1}
\quad\text{for}\quad \zeta=(x,y): y\sim x^{\alpha}.
\end{equation}
Proposition~\ref{epsStrip1} below concerns a pair 
of  simple geometric   facts, that basically follow from sublinearity 
of the lower boundary $y=x^{\alpha}$ and the reflection rule defined in~\eqref{prob23}.
However, these  facts are repeatedly used in subsequent proofs, therefore it is convenient 
to arrange them in a separate  statement.

\begin{proposition}
\label{epsStrip1}
Consider  a trajectory of the DTMC $\zeta(t)$ that is reflected by the
 lower boundary  infinitely many times. 
\begin{enumerate}
\item[1)] 
Then, for such a trajectory the following holds
$$\liminf_{t\to\infty}(\zeta_2(t)-\zeta_1^{\alpha}(t))=0.$$
In other words, such a trajectory approaches the lower boundary at any arbitrarily small distance.
\item[2)] If, in addition, there are at most $m\geq 0$ jumps up 
between consecutive reflections, 
then such a trajectory  eventually confines to the strip $U_{\alpha, m+1+\eps}$
for any given $\eps>0$.
\end{enumerate}
\end{proposition}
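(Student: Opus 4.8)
\emph{Proof strategy.} The argument will be entirely pathwise, using only that both coordinates of $\zeta(t)$ are non-decreasing in $t$ (each one-step transition raises exactly one coordinate by $1$) and that $\zeta(t)\in Q_{\varphi,\psi}$, so $\zeta_2(t)\ge\zeta_1(t)^\alpha\ge 0$ throughout --- which already supplies the lower inclusion in $U_{\alpha,\cdot}$. Fix a trajectory reflected by the lower boundary infinitely often and let $t_1<t_2<\cdots$ enumerate the times of these reflections, with $\zeta(t_n)=(x_n,y_n)$; by~\eqref{prob23}, $x_n^\alpha\le y_n<(x_n+1)^\alpha$, and the reflection step takes the process to $(x_n,y_n+1)$ at time $t_n+1$. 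First I would show $x_n\to\infty$: if $\zeta_1(t)$ stayed bounded it would be eventually constant, say $\zeta_1\equiv x^\ast$, after which every transition is an upward step, forcing $\zeta_2(t)\to\infty$ in violation of $\zeta_2(t)\le\psi(x^\ast)$ (equivalently, the upper-reflection rule~\eqref{prob24} would fire and increase $\zeta_1$), a contradiction; since also $t_n\to\infty$, we get $x_n=\zeta_1(t_n)\to\infty$. Because $0<\alpha<1$, this is the crux:
\[
0\ \le\ y_n-x_n^\alpha\ <\ (x_n+1)^\alpha-x_n^\alpha\ \longrightarrow\ 0,\qquad n\to\infty .
\]
Part 1) follows at once, since along $t=t_n$ the non-negative quantity $\zeta_2(t)-\zeta_1(t)^\alpha$ equals $y_n-x_n^\alpha\to 0$.

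For Part 2) I would control the excursion between two consecutive lower reflections. On $[t_n,t_{n+1}]$, the coordinate $\zeta_2$ increases by $1$ at the reflection step $t_n\mapsto t_n+1$ and by $1$ at each jump up, while jumps to the right and upper reflections leave $\zeta_2$ unchanged and no lower reflection occurs strictly inside $(t_n,t_{n+1})$. Since by hypothesis at most $m$ jumps up occur strictly between the two reflections, $y_{n+1}=\zeta_2(t_{n+1})\le y_n+1+m$. Using monotonicity of $\zeta_2$ and of $x\mapsto x^\alpha$, for every $t\in[t_n,t_{n+1}]$,
\[
0\ \le\ \zeta_2(t)-\zeta_1(t)^\alpha\ \le\ \zeta_2(t_{n+1})-\zeta_1(t_n)^\alpha\ \le\ (y_n-x_n^\alpha)+m+1 .
\]
Given $\eps>0$, choose $N$ with $y_n-x_n^\alpha<\eps$ for all $n\ge N$; then every $t\ge t_N$ lies in some $[t_n,t_{n+1}]$ with $n\ge N$, so $0\le\zeta_2(t)-\zeta_1(t)^\alpha<m+1+\eps$, i.e.\ $\zeta(t)\in U_{\alpha,m+1+\eps}$, as claimed.

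I do not expect a serious obstacle: the statement is genuinely elementary. The two points needing a little care are the verification that $\zeta_1(t)\to\infty$ (without which the boundary gap $(x_n+1)^\alpha-x_n^\alpha$ would not be known to vanish), and the bookkeeping of the $\zeta_2$-increments between consecutive lower reflections --- in particular keeping the deterministic reflection step separate from the at-most-$m$ interior jumps up, and noting that upper-boundary reflections, being rightward steps, contribute nothing.
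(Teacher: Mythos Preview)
Your proof is correct and follows essentially the same approach as the paper's own proof: both arguments enumerate the reflection times, use the reflection condition $x_n^\alpha\le y_n<(x_n+1)^\alpha$ together with $x_n\to\infty$ and $\alpha<1$ to get Part~1), and for Part~2) bound the vertical displacement between consecutive reflections by $(y_n-x_n^\alpha)+m+1$. Your write-up is in fact slightly more careful than the paper's---you justify $\zeta_1(t)\to\infty$ explicitly via the upper-boundary constraint (the paper simply asserts $\zeta_1(\tau_n)\to\infty$), and you spell out the monotonicity bound $\zeta_2(t)-\zeta_1(t)^\alpha\le\zeta_2(t_{n+1})-\zeta_1(t_n)^\alpha$ on each excursion interval.
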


\begin{proof}[Proof of Proposition~\ref{epsStrip1}]
Let $(\zeta_1, \zeta_2)$ be a state, at which  the DTMC
 $\zeta(t)$ is reflected by the lower boundary $y=x^{\alpha}$, i.e. the state satisfies~\eqref{prob23}
with $\varphi(x)=x^{\alpha}$.
Then, 
\begin{equation}
\label{delta(x)}
\zeta_2-\zeta_1^{\alpha}\leq \delta(\zeta_1):=(\zeta_1+1)^{\alpha}-\zeta_1^{\alpha}\sim 
\alpha (\zeta_1)^{\alpha-1}\to 0,\quad\text{as}\quad \zeta\to\infty.
\end{equation} 
 Let   $\tau_n$  be  the time, when
 the $n$-th reflection  takes place. It is obvious that $\tau_n\to \infty$, and, hence,
$\zeta_1(\tau_n)\to \infty$,  as $n\to \infty$.
Therefore, by~\eqref{delta(x)} 
\begin{equation}
\label{delta_n}
\zeta_2(\tau_n)-\zeta_1^{\alpha}(\tau_n)=\delta(\zeta_1(\tau_n))\to 0,\quad\text{as}\quad n\to \infty,
\end{equation} 
which implies  Part 1) of the proposition.

If, in addition,   there are at most $m\geq 0$ jumps up 
between consecutive reflections, then, by~\eqref{delta(x)}, 
 the process  does not exit  the strip $U_{\alpha, m+1+\delta(\zeta_1(\tau_n))}$
after $n$-th reflection. Combining this fact with~\eqref{delta_n} gives 
Part 2) of the proposition.
\end{proof}


Lemmas~\ref{LT4aa} and~\ref{Cor-A1} below describe in detail the long term behavior 
of the DTMC $\zeta(t)$ in the case, when the parameter $\alpha$ 
(determining the lower boundary $y=x^{\alpha}$) is smaller than $\alpha_{1}=\frac{\beta_1-1}{\beta_2}$, and the process starts near  the lower boundary.

\begin{lemma}
\label{LT4aa}
Let   $\alpha\in (0, \alpha_1)$, and let $\zeta=(x,y)\in U_{\alpha, C}$ for some $C>0$.
Then
$$\P_{\zeta}(\{\text{the DTMC }
 \zeta(t) \text{  jumps to the right, whenever  possible}\})\to 1,\quad\text{as}\quad x\to \infty.$$
\end{lemma}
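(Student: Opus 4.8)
The plan is to show that, starting from a state near the lower boundary, the process performs only jumps to the right before hitting the boundary again (so the reflection rule~\eqref{prob23} is never triggered by a jump up, which can only happen after at least one jump up in the interior), and that with probability tending to one this continues forever. Concretely, if $\zeta(t)=(x,y)$ with $y\sim x^{\alpha}$, then by~\eqref{p-up} the probability of a jump up at that state is of order $x^{\alpha\beta_2-\beta_1}$. Between two consecutive contacts with the boundary the first coordinate increases by order $1$ at each jump to the right; to move from the boundary up to height $y+1$ requires exactly one jump up, and the event "the process ever jumps up" from the strip $U_{\alpha,C}$, starting at first coordinate $x$, is contained in the union over $j\geq x$ of the events "a jump up occurs while the first coordinate equals $j$". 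Each such event has probability of order $j^{\alpha\beta_2-\beta_1}$, so a union bound gives
\begin{equation*}
\P_{\zeta}(\{\zeta(t)\text{ ever jumps up}\})\leq C'\sum_{j\geq x} j^{\alpha\beta_2-\beta_1}.
\end{equation*}

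The key point is now the exponent: for $\alpha<\alpha_1=\frac{\beta_1-1}{\beta_2}$ we have $\alpha\beta_2-\beta_1<-1$, so the series $\sum_{j\geq x} j^{\alpha\beta_2-\beta_1}$ converges and its tail tends to $0$ as $x\to\infty$. Hence the probability that the process ever performs a jump up tends to $1$... sorry, tends to $0$, i.e. the probability that it always jumps to the right whenever possible tends to $1$. The steps in order: (i) fix the starting state $\zeta=(x,y)\in U_{\alpha,C}$ and observe that as long as no jump up has occurred, the second coordinate stays fixed at $y$ while the first coordinate strictly increases, so at the relevant states $\zeta_2\sim\zeta_1^{\alpha}$ is eventually valid and~\eqref{p-up} applies with a uniform constant; (ii) express the "never jumps up" event as an intersection over the successive values of the first coordinate and bound its complement by the union bound above; (iii) invoke $\alpha<\alpha_1\Leftrightarrow\alpha\beta_2-\beta_1<-1$ to get a convergent tail; (iv) conclude.

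A mild technical nuisance, and the step I expect to require the most care, is the transition region: when the first coordinate is still small, $y$ need not be asymptotically equal to $x^{\alpha}$, so~\eqref{p-up} with a uniform multiplicative constant is only valid once $\zeta_1$ is large enough. I would handle this by splitting off an initial finite-probability window: for $\zeta_1\le N$ there are only finitely many states in $U_{\alpha,C}$ and one argues that with probability bounded below (and tending to $1$ as $x\to\infty$, since then the process already starts with $\zeta_1=x>N$) no jump up occurs there either, or simply by noting that $\liminf_{x\to\infty}$ only cares about large $x$. Once past this window, the comparison $\frac{x^{\alpha\beta_2}}{x^{\beta_1}+x^{\alpha\beta_2}}\le 2x^{\alpha\beta_2-\beta_1}$ holds with an absolute constant for all states of $U_{\alpha,C}$ with $\zeta_1$ large, using $y<x^{\alpha}+C$ and $\alpha\beta_2<\beta_1$. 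The rest is the elementary summation. Note that this lemma is exactly the $m=0$ input to Part~2) of Proposition~\ref{epsStrip1}, so once proved it immediately yields that for $\alpha<\alpha_1$ the process eventually confines to $U_{\alpha,1+\eps}$, consistent with $k_1=1$ from~\eqref{k1} in that regime.
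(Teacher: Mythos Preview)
Your overall approach is correct and essentially the same as the paper's: both compute the probability of the event $A=\{\text{jump right whenever possible}\}$ as a product over first-coordinate values $j\geq x$ of $(1-p_j)$ with $p_j\sim j^{\alpha\beta_2-\beta_1}$, and then use $\alpha<\alpha_1\iff \alpha\beta_2-\beta_1<-1$ to obtain a convergent tail. Your union bound on $A^c$ is just the complement of the paper's product estimate.

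However, your step (i) is wrong as stated: on the event $A$, the second coordinate does \emph{not} stay fixed at $y$. Reflections at the lower boundary (rule~\eqref{prob23}) are forced, not optional, and each one increases $\zeta_2$ by one; indeed if $\zeta_2$ stayed fixed while $\zeta_1$ grew, the process would exit the domain $Q_{\varphi,\psi}$, which is impossible. Moreover, if $\zeta_2$ were fixed then $\zeta_2/\zeta_1^{\alpha}\to 0$, not $1$, so your conclusion ``$\zeta_2\sim\zeta_1^{\alpha}$ is eventually valid'' would not follow from your premise. The correct observation---which the paper makes using Proposition~\ref{epsStrip1}---is that the deterministic ``always jump right'' trajectory stays in a strip of bounded width along the lower boundary (inside $U_{\alpha,C}$ until the first reflection, and inside $U_{\alpha,2}$ thereafter), so that $\zeta_2\sim\zeta_1^{\alpha}$ holds along the entire trajectory and~\eqref{p-up} applies with a uniform constant. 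With this replacement for (i), your argument goes through unchanged; the ``technical nuisance'' you worry about is then not an issue at all, since the initial condition $\zeta\in U_{\alpha,C}$ already gives $y\sim x^{\alpha}$ and the strip property propagates it.
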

\begin{proof}
[Proof of Lemma~\ref{LT4aa}]
 Denote  for short
$$A=\{\text{the DTMC }
 \zeta(t) \text{  jumps to the right, whenever  possible}\}.$$
It is easy to see that, given an initial state $\zeta$, the event $A$
consists of a single trajectory, such that the  component $\zeta_2(t)$
 increases  only at those moments in time, when the process is reflected by the lower boundary.
It is easy to see that, since the function $y=x^{\alpha}$ is monotonically  increasing to infinity,
the trajectory  does not leave the strip $U_{\alpha, C}$ before the first reflection, and then, by Proposition~\eqref{epsStrip1},
 it confines to the strip $U_{\alpha, 2}$. 
By~\eqref{p-up}   
\begin{align*}
\P_{\zeta}(A)&\sim \prod\limits_{k=x}^{\infty}
\left(1-k^{\alpha\beta_2-\beta_1}\right)\sim 
e^{-\sum_{k=x}^{\infty}k^{\alpha\beta_2-\beta_1}},\quad\text{as}\quad x\to\infty,
\quad\text{for}\quad \zeta=(x,y)\in U_{\alpha, 2}.
\end{align*}
Since $\alpha<\alpha_1=\frac{\beta_1-1}{\beta_2}\iff  \alpha\beta_2-\beta_1<-1$, we get 
 that 
$\sum_{k=x}^{\infty}k^{\alpha\beta_2-\beta_1}\to 0$, as $x\to\infty$. Therefore, 
$\P_{\zeta}(A)\to 1$, as $x\to \infty$, and the lemma is proved.
\end{proof}

\begin{lemma}
\label{Cor-A1}
Let   $\alpha\in (0, \alpha_1)$, and let $\zeta=(x,y)\in U_{\alpha, C}$ for some $C>0$. Then
$$\P_{\zeta}(A_{\alpha, 1})\to 1, \quad\text{as}\quad x\to\infty,$$
where  $A_{\alpha, 1}$ is the event defined in~\eqref{Ak}.
\end{lemma}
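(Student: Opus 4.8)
The idea is to deduce $A_{\alpha,1}$ from the event $A$ of Lemma~\ref{LT4aa} (the DTMC jumps to the right whenever possible) together with the geometric facts in Proposition~\ref{epsStrip1}. The plan is to show that on the event $A$ the trajectory eventually confines to the strip $U_{\alpha,2}$, is reflected by the lower boundary infinitely often, and has exactly one jump up between consecutive reflections; this forces $\limsup_{t\to\infty}(\zeta_2(t)-\zeta_1^\alpha(t))=1$ and $\liminf_{t\to\infty}(\zeta_2(t)-\zeta_1^\alpha(t))=0$, which is precisely $A_{\alpha,1}$. Since $\P_\zeta(A)\to 1$ as $x\to\infty$ by Lemma~\ref{LT4aa}, and $A$ (modulo a null-probability degenerate trajectory) is contained in $A_{\alpha,1}$, the conclusion $\P_\zeta(A_{\alpha,1})\to 1$ follows immediately.

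In more detail, first I would observe that on $A$ the second coordinate $\zeta_2(t)$ increases only at reflection times by the lower boundary, and between two consecutive such reflections there is exactly one jump up (the reflection move itself, governed by~\eqref{prob23}, increments $\zeta_2$ by one); all other moves are jumps to the right. Hence on $A$ the ``at most $m$ jumps up between consecutive reflections'' hypothesis of Part~2) of Proposition~\ref{epsStrip1} holds with $m=0$, so the trajectory eventually confines to $U_{\alpha,1+\eps}$ for every $\eps>0$; in particular $\limsup_{t\to\infty}(\zeta_2(t)-\zeta_1^\alpha(t))\le 1$. On the other hand, immediately after a reflection at a point $(\zeta_1,\zeta_2)$ with $\zeta_2-\zeta_1^\alpha\le\delta(\zeta_1)\to 0$, the process is at height $\zeta_2+1$ while the boundary is still near $\zeta_1^\alpha$, so $\zeta_2(t)-\zeta_1^\alpha(t)$ is close to $1$ at those moments; letting the reflection times tend to infinity gives $\limsup_{t\to\infty}(\zeta_2(t)-\zeta_1^\alpha(t))=1$. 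The $\liminf$ being $0$ is exactly Part~1) of Proposition~\ref{epsStrip1}, once we know the trajectory is reflected infinitely often — and it is, because on $A$ every time the boundary curve rises past the current height the process must reflect, and $x^\alpha\to\infty$.

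I would then combine these observations: $A\subseteq A_{\alpha,1}$ up to the single exceptional trajectory that never reflects (which cannot occur here since $x^{\alpha}\to\infty$ forces reflections), so $\P_\zeta(A_{\alpha,1})\ge\P_\zeta(A)\to 1$ as $x\to\infty$ by Lemma~\ref{LT4aa}. The main thing to be careful about is the bookkeeping of ``exactly one jump up between consecutive reflections'' — one must check that on $A$ no jump up of the type~\eqref{prob21-22} ever occurs, only the forced increments~\eqref{prob23}, and that consecutive reflections are genuinely separated by at least one jump to the right (so the strip width is $1$, not larger); this is where the precise definition of the event $A$ and the monotonicity of $x^{\alpha}$ do all the work. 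No serious analytic obstacle arises; the argument is essentially a translation of Lemma~\ref{LT4aa} and Proposition~\ref{epsStrip1} into the language of the events~\eqref{Ak}.
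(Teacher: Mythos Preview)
Your proposal is correct and follows essentially the same route as the paper's own proof: both reduce the claim to showing that on the event $A$ of Lemma~\ref{LT4aa} the trajectory is reflected infinitely often with no jumps up between reflections, then invoke Parts~1) and~2) of Proposition~\ref{epsStrip1} (with $m=0$) to obtain the $\liminf=0$ and the confinement to $U_{\alpha,1+\eps}$, and finally conclude via $\P_\zeta(A)\to 1$. Your write-up is in fact more careful than the paper's in two places: you explicitly argue why $\limsup\geq 1$ (not just $\leq 1$), and you flag the need to check that consecutive reflections are separated by at least one rightward jump --- both points the paper leaves implicit.
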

\begin{proof}[Proof of Lemma~\ref{Cor-A1}]
Consider the same trajectory as in the proof of Lemma~\ref{LT4aa}, i.e. where 
the  component $\zeta_2(t)$
 increases  only due to reflections at the lower boundary.
It is easy to see that this trajectory is reflected by the lower boundary infinitely many times.
Consequently, by  Part 1) of Proposition~\ref{epsStrip1}, we have that 
$\liminf_{t\to\infty}(\zeta_2(t)-\zeta_1^{\alpha}(t))=0$.
The component $\zeta_2(t)$ increases by $1$ at the moment of reflection, therefore, 
by Part 2) of  Proposition~\ref{epsStrip1},  the trajectory eventually  confines 
to the strip $U_{\alpha, 1+\eps}$ for any given $\eps>0$, so that 
$\limsup_{t\to\infty}(\zeta_2(t)-\zeta_1^{\alpha}(t))=1.$
Lemma~\ref{Cor-A1} follows now from Lemma~\ref{LT4aa}.
\end{proof}

To proceed further, recall the second item in  Definition~\ref{jumps-reflections} that distinguishes 
between   jumps and reflections 
of the DTMC $\zeta(t)$.

\begin{lemma}
\label{PR-binom}
Given $\alpha\in (0, \alpha_{cr})$,  let  $\zeta=(x, y)\in U_{\alpha, C_1}$ and let  
$N=N(x)=C_2x^{1-\alpha}$ for some  $C_1>0$ and $C_2>0$.
Let $\xi_N$ be the number of jumps up 
of the DTMC $\zeta(t)$ on the interval $[0, N]$. 
Suppose that $x$ is sufficiently large.
Then for any fixed $m\geq 1$ the following bounds hold
\begin{equation}
\label{Poisson}
C_3\lambda_x^{m}\leq \P_{\zeta}(\xi_N\geq m)\leq 
C_4\lambda_x^{m},
\end{equation}
where 
\begin{equation}
\label{lambda-n}
\lambda_x:=x^{\alpha\beta_2-\beta_1+1-\alpha},
\end{equation}
and constants $C_3>0$ and $C_4>0$  do not depend on $x$.
\end{lemma}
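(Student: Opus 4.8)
The plan is to compare the number $\xi_N$ of jumps up during $[0,N]$ with a sum of independent Bernoulli indicators and then estimate the probability of having at least $m$ successes. While the process stays in a strip $U_{\alpha,C}$ with $C$ bounded (which, by Proposition~\ref{epsStrip1} and the reflection rule, is the typical situation as long as the number of jumps up stays bounded), equation~\eqref{p-up} gives that at each step the probability of a jump up is of order $x^{\alpha\beta_2-\beta_1}$, i.e. asymptotically $\zeta_1(t)^{\alpha\beta_2-\beta_1}$. On the interval $[0,N]$ with $N=C_2 x^{1-\alpha}$, the first coordinate $\zeta_1(t)$ grows from $x$ to at most $x+N=x(1+o(1))\cdot$const; more precisely, since at most $N$ steps occur, $\zeta_1(t)\asymp x$ throughout (the number of jumps up is negligible compared with $N$, so almost all steps are jumps to the right, and in any case $\zeta_1$ is nondecreasing and bounded above by $x+N$). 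Hence each per-step jump-up probability is $\asymp x^{\alpha\beta_2-\beta_1}$, uniformly over the interval, and summing over the $\asymp N = C_2 x^{1-\alpha}$ steps gives a total "intensity" $\asymp N\cdot x^{\alpha\beta_2-\beta_1}=C_2\, x^{\alpha\beta_2-\beta_1+1-\alpha}=C_2\lambda_x$.

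Concretely, first I would set up a coupling: run the chain and, whenever a jump is possible (i.e. we are not at a reflection state), let $I_j$ be the indicator of "jump up" at the $j$-th such step. Conditionally on the past, $\P(I_j=1)$ lies between $c\,x^{\alpha\beta_2-\beta_1}$ and $C\,x^{\alpha\beta_2-\beta_1}$ for all $j$ up to time $N$, provided $\xi_N$ has not yet exceeded some fixed bound $m_0$ (so that the strip width, hence the value of $\zeta_1$ relative to $x$, is still controlled); I would run the argument for $m\le m_0$ and note $m$ is fixed. For the lower bound on $\P_\zeta(\xi_N\ge m)$: the event that a specific choice of $m$ steps among the first (say) $N/2$ non-reflection steps are all jumps up, and — to be safe — the remaining steps in that window are jumps to the right, has probability at least $c^m x^{m(\alpha\beta_2-\beta_1)}\cdot(1-O(x^{\alpha\beta_2-\beta_1}))^{N}\ge c^m x^{m(\alpha\beta_2-\beta_1)}\cdot e^{-O(\lambda_x)}$, and since $\lambda_x\to 0$ (because $\alpha<\alpha_{cr}$ forces $\alpha\beta_2-\beta_1+1-\alpha<0$; indeed $\alpha_{cr}$ is exactly the threshold where this exponent vanishes — cf.\ the relation $\alpha_k\uparrow\alpha_{cr}$ in~\eqref{alpha_k<alpha_k+1}), choosing the $m$ positions in $\binom{\lfloor N/2\rfloor}{m}\asymp N^m\asymp x^{m(1-\alpha)}$ disjoint ways yields $\P_\zeta(\xi_N\ge m)\ge C_3\, x^{m(\alpha\beta_2-\beta_1)}\cdot x^{m(1-\alpha)}=C_3\lambda_x^m$. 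For the upper bound: dominate $\xi_N$ stochastically by a Binomial$(N, C x^{\alpha\beta_2-\beta_1})$ random variable $\eta$ (valid while $\xi_N\le m_0$; but if $\xi_N$ exceeded $m_0$ this would itself be an event of probability $O(\lambda_x^{m_0+1})$, negligible for the fixed $m\le m_0$), and use the elementary bound $\P(\eta\ge m)\le \binom{N}{m}(Cx^{\alpha\beta_2-\beta_1})^m\le C_4 (N x^{\alpha\beta_2-\beta_1})^m=C_4\lambda_x^m$.

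The main obstacle is the bookkeeping needed to justify that $\zeta_1(t)\asymp x$ and that the strip width stays bounded uniformly over $[0,N]$, so that the per-step jump-up probability is genuinely $\asymp x^{\alpha\beta_2-\beta_1}$ with constants independent of the (random) configuration — i.e. turning the heuristic~\eqref{p-up} into a two-sided bound valid along the whole trajectory, not just near a reflection. The clean way around this is a stopping-time argument: let $\sigma$ be the first time either $\xi_t> m_0$ or $\zeta_1(t)>2x$ (say); before $\sigma$ the two-sided per-step bound holds deterministically, one runs the Binomial comparison on $[0,N\wedge\sigma]$, and one separately checks $\P(\sigma< N)=O(\lambda_x^{m_0+1})+o(1)$ is negligible — the $\zeta_1>2x$ part cannot happen in $\le N=C_2x^{1-\alpha}=o(x)$ steps at all. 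Once $\zeta_1\asymp x$ and the strip is thin are secured, both inequalities in~\eqref{Poisson} reduce to the two displayed elementary estimates above, and the lemma follows.
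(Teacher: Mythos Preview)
Your proposal is correct and follows essentially the same approach as the paper: both arguments observe that $\alpha<\alpha_{cr}$ forces $\lambda_x\to 0$, that $\zeta_1(t)\asymp x$ throughout $[0,N]$ (since $N=C_2x^{1-\alpha}=o(x)$), and that consequently the per-step jump-up probability stays $\asymp x^{\alpha\beta_2-\beta_1}$, so $\xi_N$ behaves like a Binomial with mean $\asymp\lambda_x$. The paper phrases the final step via the Poisson approximation $\P_\zeta(\xi_N=i)\sim e^{-\lambda_x}\lambda_x^i/i!$, whereas you extract the two-sided bound directly from $\binom{N}{m}p^m$-type estimates and make the ``strip stays thin / $\zeta_1\asymp x$'' bookkeeping explicit through a stopping time; these are cosmetic differences in presentation rather than substance.
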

\begin{proof}[Proof of Lemma~\ref{PR-binom}]
Start with noting that the condition 
$\alpha<\alpha_{cr}$ is equivalent to the condition $\alpha\beta_2-\beta_1+1-\alpha<0$, which, 
in particular,  implies that $\lambda_x\to 0$,  as $x\to \infty$.
Also, in this case we have that 
$\alpha\beta_2-\beta_1<0$.
Therefore, 
by equation~\eqref{p-up}, 
\begin{equation}
\label{p-up-1}
\P_{\zeta}(\zeta_1(1)=x,\, \zeta_2(1)=y+1)\sim x^{\alpha\beta_2-\beta_1}.
\end{equation}
Next,  assuming that the process does not jump up 
more than a fixed number of times  (not depending on $x$)  on the interval $[0,N]$, we 
can approximate  the probability of the jump up at any time $t\in [0,N]$ by the probability of the jump 
up at time $0$, that is, $x^{\alpha\beta_2-\beta_1}$ (see~\eqref{p-up-1}) for sufficiently large $x$.
This allows to approximate the distribution of the random variable $\xi_N$ by the binomial distribution
with the  mean $\lambda_x$, which, in turn, can be approximated by the Poisson 
distribution with the same mean.
More precisely,  noting first  that 
$$
(1-x^{\alpha\beta_2-\beta_1})^N\leq 
\P_{\zeta}(\xi_N=0)\leq (1-(x+N)^{\alpha\beta_2-\beta_1})^N,
$$
and 
$$(1-x^{\alpha\beta_2-\beta_1})^N\sim (1-(x+N)^{\alpha\beta_2-\beta_1})^N
\sim e^{-\lambda_x},
$$
we obtain that 
$$\P_{\zeta}(\xi_N=0)\sim e^{-\lambda_x},\quad\text{as}\quad x\to \infty.$$
Combining this with equation~\eqref{p-up-1} gives  that
$$\P_{\zeta}(\xi_N=i)\sim e^{-\lambda_x}\frac{ \lambda_x^i}{i!},\quad\text{as}\quad x\to \infty,$$
  for any fixed $i$, which, in turn, implies the bound~\eqref{Poisson}, as claimed.
\end{proof}

\begin{definition}
\label{t-S}
{\rm 
Let $\alpha\in (0, \alpha_{cr})$ be the parameter determining the lower boundary $y=x^{\alpha}$.
Given $r>0$ and   
 $\zeta=(x,y): x^{\alpha}\leq y\leq x^{\gamma}$, 
define $n_x$ as an integer such that 
\begin{equation}
\label{n0}
\left(n_xr\right)^{\frac{1}{\alpha}}\leq x<\left((n_x+1)r\right)^{\frac{1}{\alpha}}
\end{equation}
and 
$$t_{r,n}=\min(t: \zeta_1(t_{r,n} )\geq \left(nr\right)^{\frac{1}{\alpha}})\quad\text{for}\quad n>n_x.$$
}
\end{definition}

\begin{remark}
{\rm 
Note that due to  reflection 
 both $\zeta_1(t)\to \infty$, and 
$\zeta_2(t)\to \infty$, as $t\to \infty$.  Therefore, 
$t_{r,n}<\infty$   and $t_{r,n}<t_{r, n+1}$ for all $n$.
In addition, note that 
\begin{equation}
\label{x_n-growth}
\left((n+1)r\right)^{\frac{1}{\alpha}}-\left(nr\right)^{\frac{1}{\alpha}}
\sim 
\frac{r}{\alpha}\left(nr\right)^{\frac{1-\alpha}{\alpha}},\quad\text{as}\quad n\to \infty.
\end{equation}
}
\end{remark}
\begin{definition}
\label{Srn}
{\rm 
Let $(t_{r,n},\, n>n_x)$ be the sequence of time moments defined in Definition~\ref{t-S}
for some $\zeta=(x,y)$.
Define 
$S_{r, n}$ as the random variable that is equal to the number of jumps up 
of the  DTMC $\zeta(t)$ on the interval $[t_{r, n}, t_{r, n+1})$. 
}
\end{definition}


\begin{lemma}
\label{2k-1+eps}
Let    $\alpha \in [\alpha_{k-1},  \alpha_k)$ for some  $k\geq 2$,
and  let $\zeta=(x,y)\in U_{\alpha, C}$ for some $C>0$.
Let $(S_{r,n},\, n>n_x)$ be the sequence 
of random variables defined in Definition~\ref{Srn}.
Then,   
\begin{equation}
\P_{\zeta}\left(S_{r, n}<k \text{ for all } n>n_x\right)\to 1,\quad\text{as}\quad x\to \infty,\label{Brk}
\end{equation}
and 
\begin{equation}
\P_{\zeta}\left(S_{r, n}=k-1 \text{ for infinitely many } n\right)=1.
\label{Brk1}
\end{equation}

\end{lemma}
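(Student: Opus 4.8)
The plan is to reduce both assertions to the Poisson-type estimates of Lemma~\ref{PR-binom} applied on each block $[t_{r,n}, t_{r,n+1})$, together with a Borel--Cantelli argument. First I would observe that, by the definition of $t_{r,n}$ and~\eqref{x_n-growth}, on the interval $[t_{r,n}, t_{r,n+1})$ the first component $\zeta_1$ grows from roughly $(nr)^{1/\alpha}$ by an amount of order $\frac{r}{\alpha}(nr)^{(1-\alpha)/\alpha}$, so the length $N_n$ of this block is of order $x_n^{1-\alpha}$ with $x_n=(nr)^{1/\alpha}$ — exactly the scaling $N=C_2 x^{1-\alpha}$ required in Lemma~\ref{PR-binom}. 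One technical point to address is that, to invoke Lemma~\ref{PR-binom}, the process must be inside a strip $U_{\alpha, C_1}$ at the start of the block; this follows because between consecutive reflections the second component only increases by reflections (each raising $\zeta_2$ by $1$), and as long as the number of jumps up in a block stays bounded the trajectory stays in some fixed strip $U_{\alpha, C_1}$ — so I would set this up as a bootstrap: assume the process is in the strip, apply the lemma, conclude few jumps up, hence the process is still in the strip.

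For~\eqref{Brk}: applying the upper bound in~\eqref{Poisson} with $m=k$ gives $\P_\zeta(S_{r,n}\ge k)\le C_4\lambda_{x_n}^{k}$, where $\lambda_{x_n}=x_n^{\alpha\beta_2-\beta_1+1-\alpha}$. The key computation is that the exponent $\alpha\beta_2-\beta_1+1-\alpha$ is negative (since $\alpha<\alpha_{cr}$), and moreover, because $\alpha<\alpha_k$, one has $k(\alpha\beta_2-\beta_1+1-\alpha)<-\alpha$; indeed $\alpha<\alpha_k=\frac{\beta_1-1}{\beta_2-1+1/k}$ is equivalent to $\alpha(\beta_2-1+\tfrac1k)<\beta_1-1$, i.e. $k(\alpha\beta_2-\alpha-\beta_1+1)<-\alpha$. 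Hence $\lambda_{x_n}^k = x_n^{k(\alpha\beta_2-\beta_1+1-\alpha)} = (nr)^{k(\alpha\beta_2-\beta_1+1-\alpha)/\alpha}$ decays faster than $n^{-1}$, so $\sum_n \P_\zeta(S_{r,n}\ge k)<\infty$. By Borel--Cantelli, almost surely $S_{r,n}<k$ for all large $n$; combined with the fact that the probability of the "bad" event on the finitely many initial blocks before reaching a large enough $x_n$ tends to $0$ as $x\to\infty$, we get~\eqref{Brk}.

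For~\eqref{Brk1}: here I would use the lower bound in~\eqref{Poisson} with $m=k-1$, giving $\P_\zeta(S_{r,n}\ge k-1)\ge C_3\lambda_{x_n}^{k-1}$. Since $\alpha\ge\alpha_{k-1}$ is equivalent to $(k-1)(\alpha\beta_2-\beta_1+1-\alpha)\ge-\alpha$, the sum $\sum_n \lambda_{x_n}^{k-1}=\sum_n (nr)^{(k-1)(\alpha\beta_2-\beta_1+1-\alpha)/\alpha}$ diverges (the exponent of $n$ is $\ge-1$). To turn divergence into an infinitely-often statement I need a second Borel--Cantelli (the divergent direction), which requires (near-)independence of the events $\{S_{r,n}\ge k-1\}$ across $n$: the blocks $[t_{r,n},t_{r,n+1})$ are determined by disjoint portions of the trajectory, and conditionally on the state at $t_{r,n}$ the increments in the block have the required binomial-type behavior uniformly, so a conditional Borel--Cantelli (Lévy's extension) applies and yields $S_{r,n}\ge k-1$ i.o. almost surely. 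Finally, combining this with~\eqref{Brk} (which forces $S_{r,n}\le k-1$ eventually) gives $S_{r,n}=k-1$ for infinitely many $n$, which is~\eqref{Brk1}. \emph{The main obstacle} I anticipate is the divergent-series Borel--Cantelli step: making the conditional independence / uniform lower bound rigorous across blocks, and checking that the "few jumps up" bootstrap does not break the hypotheses of Lemma~\ref{PR-binom} on the relevant blocks; the arithmetic with the exponents $\alpha_{k-1}\le\alpha<\alpha_k$ is the routine part.
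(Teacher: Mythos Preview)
Your proposal is correct and follows essentially the same route as the paper: apply the Poisson-type bounds of Lemma~\ref{PR-binom} block by block, translate the hypothesis $\alpha\in[\alpha_{k-1},\alpha_k)$ into the summability/divergence dichotomy for $\sum_n\lambda_{x_n}^{k}$ versus $\sum_n\lambda_{x_n}^{k-1}$, and conclude via (conditional) Borel--Cantelli. The paper organises~\eqref{Brk} slightly differently---it writes the conditional bound $\P_{\zeta}(S_{r,n}\ge k\mid S_{r,n_x}<k,\ldots,S_{r,n-1}<k)\le C_3\lambda_n^k$ and bounds the product directly, and it also treats the case of small $r$ by covering $r$-intervals with $r'$-intervals for large $r'$---but these are exactly the points you flag as the bootstrap and the ``main obstacle,'' so there is no substantive difference.
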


\begin{proof}[Proof of Lemma~\ref{2k-1+eps}]

Start with showing~\eqref{Brk} in the case, when $r$ is sufficiently large.
Recall that $n_0$ is a positive integer, such that~\eqref{n0} holds. 
It is easy to see  that 
the lower boundary  $y=x^{\alpha}$ increases  by $r$ on the interval 
$[\left(rn\right)^{\frac{1}{\alpha}}, \left(r(n+1)\right)^{\frac{1}{\alpha}})$.
Therefore, if $r$ is sufficiently large, and 
$S_{r, n}<k$ for all $n>n_x$, then 
\begin{equation}
\label{Uk}
\{S_{r, n}<k\,\text{  for all }\,  n\}\subseteq \{\zeta(t)\in U_{\alpha, 2k-1+C}\,\text{ for all }\, t\geq 0\},
\end{equation}
i.e. the process cannot escape the strip $U_{\alpha, 2k-1+C}$. 
Indeed, it is easy to see that the difference $\zeta_2(t)-\zeta_1^{\alpha}(t)$ cannot be 
more than $2k-1+C$. On the other hand, 
 the difference $\zeta_2(t)-\zeta_1^{\alpha}(t)$ can be equal to 
$2k-1+C$.  Indeed, this can happen, 
when  the process is reflected near the end of the  interval $[t_{r, n}, t_{r, n+1})$, immediately 
jumps  up $k-1$ times, and,  jumps up again  $k-1$ times at the beginning 
of the next interval.

Further, by Definition~\ref{t-S},  
\begin{equation}
\label{Uk1}
\zeta_1(t_{r,n})-\left(rn\right)^{\frac{1}{\alpha}}\leq 1\quad\text{for all}\quad n>n_x.
\end{equation}
Without loss of generality, we  can also assume that the initial position $\zeta=(x,y)$ 
 is such that
\begin{equation}
\label{Uk10}
x-\left(rn_x\right)^{\frac{1}{\alpha}}\leq 1.
\end{equation}
Then, it follows from~\eqref{Uk}-\eqref{Uk10}  and 
Lemma~\ref{PR-binom} that
  \begin{equation}
\label{Pk1}
\P_{\zeta}(S_{r,n}\geq k|S_{r,n_x}<k,..., S_{r, n-1}<k)\leq C_3
\lambda_n^{k},
\end{equation}
and, hence,  
\begin{equation}
\label{a1}
\P_{\zeta}(\cap_{i=n_x}^n\{S_{r,i}<k\})
\geq  \prod_{i=n_x}^{n}(1-C_3\lambda_i^k),
\end{equation}
where 
\begin{equation}
\label{lambda-n}
\lambda_i:=(ir)^{\frac{\alpha(\beta_2-1)-(\beta_1-1)}{\alpha}}\quad\text{for}\quad i\geq 0.
\end{equation}
Note that the assumption 
$\alpha\in[\alpha_{k-1}, \alpha_k)$ can be rewritten as follows
\begin{equation}
\label{k_k-1}
\frac{\alpha(\beta_2-1)-(\beta_1-1)}{\alpha}k<-1\leq \frac{\alpha(\beta_2-1)-(\beta_1-1)}{\alpha}(k-1).
\end{equation}
The left inequality in the preceding display implies  that 
$\sum_{n=n_x}^{\infty}\lambda_n^k\to 0$, as $n_x\to \infty$. Combining this 
with~\eqref{a1} gives that
\begin{equation}
\begin{split}
\label{a2}
\P_{\zeta}\left(S_{r, n}<k \text{ for all } n\right)&
\geq \prod\limits_{n=n_x}^{\infty}\left(1-C_3\lambda_n^k\right)
\geq e^{-O(1)\sum\limits_{n=n_x}^{\infty}\lambda_n^k}
\to 1,\quad\text{as}\quad x\to\infty.
\end{split}
\end{equation}

Next, consider the case of an arbitrary $r$. 
It is easy to see that in this case  time intervals  $[t_{r, n},  t_{r, n+1})$ ($r$-intervals)
can be covered by  similar intervals with  a sufficiently   large  $r'>r$ ($r'$-intervals) in such a manner 
that each $r$-interval  is covered by an $r'$-interval.
Clearly, if it occurs that $S_{r, n}\geq k$  for infinitely many  $n$,
 then  there are infinitely many  $r'$-intervals, such that 
$S_{r', n}\geq k$. 
The probability of the latter tends to zero, as we have proved in the case 
of large $r$, and, hence,~\eqref{BrK} holds for an arbitrary $r$.

Next, let us show~\eqref{Brk1}.
To this end observe that 
  \begin{equation}
\label{Pk2}
\P_{\zeta}(S_{r,n}\geq k-1|{\cal F}_{n-1})\geq 
 C_2 \lambda_n^{k-1},
\end{equation}
where ${\cal F}_{n-1}$ is  the $\sigma$-field generated by $\zeta(t),\, t<t_{r,n}$.
Indeed, it suffices  to consider 
the  case  when the process starts near the lower boundary
(i.e. $\zeta\in U_{\alpha, C}$ for some $C>0$).
By  Lemma~\ref{PR-binom} 
and  in~\eqref{k_k-1}, we have that  
$\sum_{n=1}^{\infty}\lambda_n^{k-1}=\infty$,
and~\eqref{Brk1} follows from the conditional variant of the second Borel-Cantelli lemma 
(e.g. see Theorem 5.3.2 in~\cite{Durrett}).
\end{proof}

\begin{lemma}
\label{0+k}

Let    $\alpha \in [\alpha_{k-1},  \alpha_k)$ for some  $k\geq 2$,
and  let $\zeta=(x,y)\in U_{\alpha, C}$ for some $C>0$.
Then 
$\P_{\zeta}\left(A_{\alpha, k}\right)\to 1$,
as $x\to\infty,$ 
where $A_{\alpha, k}$ is the event defined in~\eqref{Ak}.
\end{lemma}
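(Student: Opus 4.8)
The plan is to combine the two conclusions of Lemma~\ref{2k-1+eps} with the geometric facts of Proposition~\ref{epsStrip1}, exactly as Lemma~\ref{Cor-A1} was deduced from Lemma~\ref{LT4aa} and Proposition~\ref{epsStrip1} in the baseline case $k=1$. Recall that the event $A_{\alpha,k}$ in~\eqref{Ak} asks for two things: $\limsup_{t\to\infty}(\zeta_2(t)-\zeta_1^{\alpha}(t))=k$ and $\liminf_{t\to\infty}(\zeta_2(t)-\zeta_1^{\alpha}(t))=0$. So I would establish, on an event of probability tending to $1$ as $x\to\infty$, the upper bound $\limsup\le k$; separately (and with probability one) the lower bound $\liminf=0$; and finally, again with probability one on the relevant event, that the value $k$ is actually approached infinitely often, so that $\limsup= k$ rather than something strictly smaller.

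First I would show the process is reflected by the lower boundary infinitely many times. Since $\alpha<\alpha_{cr}<1$ the lower boundary $y=x^{\alpha}$ is sublinear, and because the DTMC must keep moving either right or up forever, a trajectory confined to a strip $U_{\alpha,C'}$ of bounded width is forced to reflect infinitely often — each reflection being the mechanism~\eqref{prob23} by which $\zeta_2$ can increase while staying above $x^{\alpha}$. (This is the same observation used in Lemma~\ref{Cor-A1}.) Part~1) of Proposition~\ref{epsStrip1} then gives $\liminf_{t\to\infty}(\zeta_2(t)-\zeta_1^{\alpha}(t))=0$ deterministically on such trajectories. Next, on the event in~\eqref{Brk} of Lemma~\ref{2k-1+eps}, there are at most $k-1$ jumps up in each interval $[t_{r,n},t_{r,n+1})$; choosing $r$ large, the number of jumps up between two consecutive reflections is at most $m:=2(k-1)=2k-2$ (a reflection near the end of one $r$-interval followed by $k-1$ jumps up there and $k-1$ more at the start of the next). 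Hence Part~2) of Proposition~\ref{epsStrip1} — wait, this would only give a strip of width $m+1+\eps=2k-1+\eps$, which is weaker than $\limsup\le k$. So the bound on jumps up \emph{between reflections} must be sharpened: after a reflection the component $\zeta_2$ sits essentially on $x^{\alpha}$, and $S_{r,n}<k$ limits the jumps up in each $r$-interval but a reflection resets the height, so between two \emph{consecutive} reflections within a single $r$-interval there can be at most $k-1$ jumps up; the width-$2k-1$ scenario requires no reflection in a whole $r$-interval, which for large $r$ cannot persist because the boundary rises by $r\to\infty$ while at most $k-1<\infty$ jumps up occur, forcing the trajectory down to the boundary, i.e. a reflection. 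Making this precise — that for $r$ large, on $\{S_{r,n}<k\ \forall n\}$ one has at most $k-1$ jumps up between consecutive reflections — yields, via Part~2) of Proposition~\ref{epsStrip1} with $m=k-1$, that the trajectory eventually confines to $U_{\alpha,k+\eps}$ for every $\eps>0$, so $\limsup_{t\to\infty}(\zeta_2(t)-\zeta_1^{\alpha}(t))\le k$.

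To get equality $\limsup=k$ I would use~\eqref{Brk1}: with probability one $S_{r,n}=k-1$ for infinitely many $n$. On such an $r$-interval the process performs $k-1$ jumps up; combined with the reflection just before (which sits it within $\delta(\zeta_1)\to 0$ of the boundary) and a short run of jumps up immediately after a reflection on the \emph{previous} interval, the difference $\zeta_2(t)-\zeta_1^{\alpha}(t)$ reaches values arbitrarily close to $k$. More carefully: it suffices to note that on infinitely many $r$-intervals the process reflects and then jumps up $k-1$ times in immediate succession, bringing the difference to $(k-1)+\delta(\zeta_1(\tau_n))\to k-1$; iterating across the interval boundary with another $k-1$ jumps up is not needed for the value $k$ — I need to be a bit more careful and instead argue that with probability one there are infinitely many reflections immediately followed by $k-1$ consecutive jumps up that land just below the curve $y=x^\alpha+k$, which follows from a conditional second Borel–Cantelli argument as in the proof of~\eqref{Brk1}, using that each such event has conditional probability $\gtrsim \lambda_n^{k-1}$ and $\sum_n\lambda_n^{k-1}=\infty$. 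Then $\limsup_{t\to\infty}(\zeta_2(t)-\zeta_1^{\alpha}(t))\ge k$, completing the identification of $A_{\alpha,k}$. Finally I would assemble: the upper-bound event has $\P_\zeta$-probability $\to 1$ by~\eqref{Brk}, while the two infinitely-often statements hold a.s.\ on that event, so $\P_\zeta(A_{\alpha,k})\to 1$ as $x\to\infty$.

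The main obstacle is the middle step — upgrading "$S_{r,n}<k$ on every $r$-interval" to "at most $k-1$ jumps up between consecutive reflections," i.e. ruling out the accumulation of jumps up straddling an interval boundary — since a naive application of Proposition~\ref{epsStrip1} only delivers width $2k-1+\eps$. The resolution is the sublinearity/large-$r$ comparison (the boundary rises by $r$ over each $r$-interval, forcing a reflection once $r$ exceeds the bounded cumulative vertical displacement), essentially the same device already used in Lemma~\ref{2k-1+eps} to pass from large $r$ to arbitrary $r$; I would phrase it as: choosing $r$ large enough that $r > k$ (so the boundary's rise over one $r$-interval strictly exceeds any admissible net upward displacement under $S_{r,n}<k$) guarantees at least one reflection per $r$-interval, hence at most $k-1$ jumps up between successive reflections.
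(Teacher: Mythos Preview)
Your overall plan matches the paper's: use Lemma~\ref{2k-1+eps} together with Proposition~\ref{epsStrip1} to establish separately that $\liminf(\zeta_2-\zeta_1^\alpha)=0$ and $\limsup(\zeta_2-\zeta_1^\alpha)=k$, just as Lemma~\ref{Cor-A1} was derived in the case $k=1$. The $\liminf$ part is fine and coincides with the paper's argument.

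The gap is in your upper bound $\limsup\le k$. Your claimed implication ``at least one reflection per $r$-interval, hence at most $k-1$ jumps up between successive reflections'' does not hold: two consecutive reflections may lie in \emph{adjacent} $r$-intervals, and then the jumps up between them are drawn from both intervals, giving as many as $2(k-1)$ --- precisely the straddling obstacle you yourself raised and then set aside. Taking $r>k$ ensures a reflection in every $r$-interval but does nothing to prevent this straddling. The paper avoids this by changing the viewpoint: it introduces the random intervals between reflections, sets $S_n$ equal to the number of jumps up on $[\tau_n,\tau_{n+1})$, and argues directly (re-running the Poisson/Borel--Cantelli estimate of Lemma~\ref{PR-binom} on these random intervals, which have length $O(\zeta_1(\tau_n)^{1-\alpha})$ so long as the process stays in a bounded strip) that $\P_\zeta(S_n<k\text{ for all }n)\to 1$. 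Then Proposition~\ref{epsStrip1}(2) with $m=k-1$ gives eventual confinement to $U_{\alpha,k+\eps}$.

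For $\limsup\ge k$: your computation slips (a reflection followed immediately by $k-1$ jumps up produces a difference in $[k,k+\delta)$, not $(k-1)+\delta$, since the reflection itself contributes $1$), but your fallback Borel--Cantelli idea is sound. The paper's route is a bit different and cleaner: it applies~\eqref{Brk1} with $r$ chosen \emph{small}, so that the boundary rises by at most an arbitrarily small $\eps'$ over each $r$-interval; then $S_{r,n}=k-1$ for infinitely many $n$ forces the difference to exceed $k-\eps$ infinitely often.
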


\begin{proof}[Proof of Lemma~\ref{0+k}]
We  use notations of Lemma~\ref{2k-1+eps}.

First, given a sufficiently large $r>0$ consider a trajectory of the process, such 
that  $S_{r, n}<k$  for all $n$. Comparing the  growth of the trajectory with that of 
the lower boundary (similarly to the proof of Lemma~\ref{2k-1+eps})
gives  that such a trajectory is reflected by the lower boundary 
at least once on each time interval $[t_{r, n}, t_{r, n+1})$.  
Consequently, the trajectory is reflected infinitely many times. Combining this with 
 Part 1) of  Proposition~\ref{epsStrip1} and Lemma~\ref{2k-1+eps} gives  that
\begin{equation}
\label{liminf=0}
\lim_{x\to\infty}\P_{\zeta}\left(\liminf_{t\to \infty}(\zeta_2(t)-\zeta_1^{\alpha}(t))=0\right)=1\quad
\text{for}\quad \zeta=(x,y)\in  U_{\alpha, C}. 
\end{equation}
Further, given  $n\geq 1$ let  $\tau_n$ be the  time moment, when the process is reflected for 
 the $n$-th time, and let $\tau_0=0$.
Let $S_{n}$ be the number of jumps up on the interval $[\tau_n, \tau_{n+1})$ for $n\geq 0$.
It is easy to see that, if $S_n<k$ for all $n$, then 
differences $\tau_{n+1}-\tau_{n}$ are uniformly bounded, provided that 
$S_n<k$ for all $n$. Therefore, any of these  intervals is covered by some 
 deterministic  interval $[t_{r,n}, t_{r, n+1})$ for a sufficiently  large $r$. 
Consequently, with a little of extra work,  it follows 
from Part 1) of Lemma~\ref{2k-1+eps} 
that 
$$\lim\limits_{x\to\infty}\P_{\zeta}(S_n<k\text{ for all } n)=1,\quad\text{for}
\quad  \zeta=(x,y)\in U_{\alpha, C}.$$
Combining this result with Part 2) of 
 Proposition~\ref{epsStrip1}, gives that, with  $\P_{\zeta}$-probability
  tending to $1$, as $x\to\infty$, 
\begin{itemize}
\item[a)]  the DTMC $\zeta(t)$ eventually confines
 to the strip $U_{\alpha, k+\eps}$ for any given $\eps>0$.
\end{itemize}

On the other hand,
\begin{itemize}
\item[b)]  for any given $\eps>0$
the  DTMC $\zeta(t)$  almost surely exits the strip $U_{\alpha, k-\eps}$ infinitely many times.
\end{itemize}
Indeed, choose  $r>0$ in Part 2) of Lemma~\ref{2k-1+eps} is such that 
the lower boundary $y=x^{\alpha}$ grows between points $(nr)^{\frac{1}{\alpha}}$ and  
$((n+1)r)^{\frac{1}{\alpha}}$ by not more than an arbitrarily small $\eps'>0$.
By Part 2) of Lemma~\ref{2k-1+eps}, almost surely there are infinitely many 
intervals $t_{r,n}, t_{r, n+1})$, where the process jumps up exactly $k-1$ times. This gives b).

Items a) and b) imply that
\begin{equation}
\label{limsup=k}
\lim_{x\to\infty}\P_{\zeta}\left(\limsup_{t\to \infty}(\zeta_2(t)-\zeta_1^{\alpha}(t))=k\right)=1
\quad\text{for}\quad \zeta=(x,y)\in U_{\alpha, C}.
\end{equation}

The lemma now follows from~\eqref{liminf=0} and~\eqref{limsup=k}.
\end{proof}

\subsection{The case of the upper boundary}
\label{upper}

In this section we study the long term behavior of the DTMC $\zeta(t)$ in the case, when the 
process  starts near the upper boundary. Recall that $1<\beta_1\leq \beta_2$, and the upper 
boundary is given by the curve 
$y=x^{\gamma}$, where  $\gamma>\alpha_{cr}=\frac{\beta_1-1}{\beta_2-1}$.
We distinguish three cases,  namely, $\gamma>1$
(superlinear upper boundary), $\gamma=1$ (the case of the bisector) and 
$\alpha_{cr}<\gamma<1$ (sublinear upper boundary). 
Note that the last two cases are possible only in the asymmetric case $1<\beta_1<\beta_2$.
The case of the super-linear upper boundary 
and the case of the bisector are considered in Section~\ref{Up-super}, and
the case of  the sublinear upper boundary is considered  in
Section~\ref{Up-sublinear}.

\subsubsection{Upper boundary: the bisector and above}
\label{Up-super}

In this section we study the most probable 
 long term behavior of the DTMC $\zeta(t)$ in the case when it starts
near the upper boundary given by $y=x^{\gamma}$, where $\gamma\geq 1$.
In this case the corresponding results are similar to the results obtained in Section~\ref{lower}
in the case of the lower boundary. 
Therefore, we  only state the results and relate  them to their analogues in Section~\ref{lower}.
However, in the case of a bisector 
(which is possible only in the asymmetric case $1<\beta_1<\beta_2$),
 the corresponding results can be refined (see Remark~\ref{Cor1-bisector}
and Lemma~\ref{Cor2-bisector} below).

We start with some definitions.
\begin{definition}
\label{definitions2}
{\rm 
Given $\beta_1, \beta_2: 1<\beta_1\leq \beta_2$ 
define 
\begin{equation}
\label{gamma-k}
\gamma_k=\frac{\beta_1-1+\frac{1}{k}}{\beta_2-1}=\alpha_{cr}+\frac{1}{(\beta_2-1)k}\quad\text{for}
\quad k\in \N.
\end{equation}
}
\end{definition}

\begin{remark}
{\rm 
Note that quantities $(\gamma_k,\, k\geq 1)$ are similar  to quantities $(\alpha_k,\, k\geq 1)$ (defined
 in~\eqref{alpha-k}).
Note that 
$\gamma_{k+1}<\gamma_{k}$ for any $k\geq 1$.  If 
 $\beta_1=\beta_2$, then  
$\gamma_k=\alpha_k^{-1}$ for any $k\geq 1$.
}
\end{remark}

\begin{definition}
\label{tilde-stripe}
{\rm 
Let the upper boundary given by the curve $y=x^{\gamma}$, where $\gamma>1$.
Given $r>0$
define the set (stripe along the upper boundary)
\begin{equation}
\label{tilde-U}
\begin{split}
\widetilde{U}_{\gamma, r}&=
\{(x,y)\in\N^2:
x\geq r,\, (x-r)^{\gamma} < y \leq  x^{\gamma}\}\\
&=
\{(x, y)\in\N^2:x\geq r,\,
y^{\frac{1}{\gamma}}\leq  x < y^{\frac{1}{\gamma}}+r\}.
\end{split}
\end{equation}
}
\end{definition}

\begin{remark}
{\rm 
The stripes $\widetilde{U}_{\gamma, r}$  are analogous of  stripes 
$U_{\alpha, r}$  along  the lower  boundary defined in Definition~\ref{stripes}.
}
\end{remark}

Observe, that in the case, when  $\frac{\beta_1}{\gamma}-\beta_2<0$, we have the following approximation
for the probability of the jump to the right (see~\eqref{prob21-22})
\begin{equation}
\label{p-right}
\P_{\zeta}(\zeta_1(1)=x+1,\, \zeta_2(1)=y)
\sim\frac{y^{\frac{\beta_1}{\gamma}}}
{y^{\frac{\beta_1}{\gamma}}+y^{\beta_2}}\sim y^{\frac{\beta_1}{\gamma}-\beta_2}
\sim x^{\beta_1-\gamma\beta_2}
\quad\text{for}\quad \zeta=(x,y): y\sim x^{\gamma}.
\end{equation}

\begin{lemma}
\label{gamma-1}
Recall that $1<\beta_1\leq \beta_2$ and suppose one of the following two conditions is fulfilled
\begin{itemize}
\item[a)]  $\beta_2-\beta_1\leq 1$ (i.e. $\gamma_1=\frac{\beta_1}{\beta_2-1}\geq 1$ )
and $\gamma>\max(1, \gamma_1)$;  
\item[b)]  $\beta_2-\beta_1>1$  (i.e. $\gamma_1=\frac{\beta_1}{\beta_2-1}<1$) and $\gamma\geq 1$
\end{itemize}
and $\zeta=(x,y)\in \widetilde{U}_{\gamma, C}$ for some $C>0$.
Then, 
\begin{enumerate}
\item[1)] $\lim_{y\to\infty}\P_{\zeta}(\text{the DTMC }  \zeta(t)   \text{ jumps up, whenever possible})=1$,
\item[2)] $\lim_{y\to\infty}\P_{\zeta}(\widetilde{A}_{\gamma, 1})=1$,
where the event $\widetilde{A}_{\gamma, 1}$ is defined in~\eqref{Ak-tilde}.
\end{enumerate}
\end{lemma}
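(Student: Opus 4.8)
The plan is to mirror, essentially verbatim, the structure of the proofs of Lemma~\ref{LT4aa} and Lemma~\ref{Cor-A1} that were already carried out near the lower boundary, transporting them across the reflection $x\leftrightarrow y$, $\alpha\leftrightarrow 1/\gamma$ that exchanges the roles of the two bins. The hypothesis in either case~a) or case~b) is precisely designed so that $\frac{\beta_1}{\gamma}-\beta_2<-1$, i.e. the exponent $\beta_1-\gamma\beta_2<-1$ appearing in the approximation~\eqref{p-right} for the probability of a jump to the right is strictly less than $-1$; I would first record this equivalence. Indeed, $\gamma>\gamma_1=\frac{\beta_1}{\beta_2-1}$ is the same as $\gamma(\beta_2-1)>\beta_1$, i.e. $\gamma\beta_2-\beta_1>\gamma\ge 1$ (in case a) we also use $\gamma>1$; in case b) already $\gamma_1<1\le\gamma$ suffices). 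So near the upper boundary every step to the right occurs with probability $\sim x^{\beta_1-\gamma\beta_2}$, a summable-in-$x$ quantity.

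For Part~1), let $A=\{\text{the DTMC }\zeta(t)\text{ jumps up, whenever possible}\}$. Starting from $\zeta=(x,y)\in\widetilde U_{\gamma,C}$, the event $A$ again consists of a single trajectory: the component $\zeta_1(t)$ increases only at the moments the process is reflected by the upper boundary (transition~\eqref{prob24}). Since $y=x^\gamma$ is increasing to infinity, this trajectory does not leave $\widetilde U_{\gamma,C}$ before the first upper reflection and then, by the analogue of Proposition~\ref{epsStrip1} for the upper boundary, confines to $\widetilde U_{\gamma,2}$. Along such a trajectory, using~\eqref{p-right},
$$
\P_{\zeta}(A)\sim\prod_{k=x}^{\infty}\left(1-k^{\beta_1-\gamma\beta_2}\right)\sim
e^{-\sum_{k=x}^{\infty}k^{\beta_1-\gamma\beta_2}},\qquad\text{as }y\to\infty,
$$
and since $\beta_1-\gamma\beta_2<-1$ the tail sum $\sum_{k=x}^{\infty}k^{\beta_1-\gamma\beta_2}\to 0$ as $x\to\infty$ (equivalently $y\to\infty$), whence $\P_\zeta(A)\to 1$. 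This is exactly the computation in Lemma~\ref{LT4aa} with $\alpha\beta_2-\beta_1$ replaced by $\beta_1-\gamma\beta_2$.

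For Part~2), I would take the same trajectory and observe it is reflected by the upper boundary infinitely often, so by (the upper-boundary analogue of) Part~1) of Proposition~\ref{epsStrip1}, $\liminf_{t\to\infty}(\zeta_1(t)-\zeta_2^{1/\gamma}(t))=0$; and since $\zeta_1(t)$ increases by exactly $1$ at each upper reflection, by Part~2) of that proposition (with $m=0$ jumps to the right between consecutive reflections) the trajectory eventually confines to $\widetilde U_{\gamma,1+\eps}$ for every $\eps>0$, giving $\limsup_{t\to\infty}(\zeta_1(t)-\zeta_2^{1/\gamma}(t))=1$. Thus on $A$ the event $\widetilde A_{\gamma,1}$ of~\eqref{Ak-tilde} holds, and Part~2) follows from Part~1). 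The only genuinely non-routine point — and the step I expect to be the mild obstacle — is stating cleanly the upper-boundary mirror of Proposition~\ref{epsStrip1} (the bounds $\zeta_1-\zeta_2^{1/\gamma}\le(\zeta_2+1)^{1/\gamma}-\zeta_2^{1/\gamma}\sim\frac1\gamma\zeta_2^{1/\gamma-1}\to 0$ at an upper reflection, valid because $\gamma\ge 1$ keeps $1/\gamma\le 1$ so the increment is $O(\zeta_2^{1/\gamma-1})$), and checking the elementary equivalence of the two forms of the hypothesis; everything else is a transcription of Section~\ref{lower}. The authors may simply remark this and omit details, exactly as the prose preceding the lemma suggests.
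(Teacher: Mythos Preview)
Your approach matches the paper's exactly: both parts are declared analogues of Lemma~\ref{LT4aa} and Lemma~\ref{Cor-A1}, with the paper writing only the key product for Part~1) and dismissing Part~2) as ``similar''. So at the level of strategy there is nothing to add.

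There is, however, a slip in your product. Along the trajectory $A$ (jump up whenever possible) it is the \emph{second} coordinate $\zeta_2$ that increases by $1$ at each free step; $\zeta_1$ increases only at upper reflections, and between two consecutive reflections there are $\sim\gamma x^{\gamma-1}$ free steps when $\gamma>1$, not one. Hence the product must be indexed by $y$, not by $x$:
\[
\P_\zeta(A)\ \sim\ \prod_{k=y}^{\infty}\Bigl(1-k^{\beta_1/\gamma-\beta_2}\Bigr)\ \sim\ e^{-\sum_{k=y}^{\infty}k^{\beta_1/\gamma-\beta_2}},
\]
which is precisely the paper's formula. The relevant summability condition is then $\frac{\beta_1}{\gamma}-\beta_2<-1$, i.e.\ $\gamma\beta_2-\beta_1>\gamma$, equivalently $\gamma>\gamma_1$. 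You actually established this inequality in your opening paragraph (``$\gamma\beta_2-\beta_1>\gamma\ge 1$'') and then needlessly weakened it to $\beta_1-\gamma\beta_2<-1$ to fit your mis-indexed product. As written, your product $\prod_{k=x}^{\infty}(1-k^{\beta_1-\gamma\beta_2})$ has only one factor per value of $x$ and is therefore an \emph{upper} bound on $\P_\zeta(A)$; its convergence to $1$ does not by itself give the claim. With the index corrected to $y$ (and the corresponding exponent $\beta_1/\gamma-\beta_2$), your argument is complete and identical to the paper's.
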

\begin{proof}
Part 1) of Lemma~\ref{gamma-1} is an analogue of  Lemma~\ref{LT4aa}.
By~\eqref{p-right}, 
 the probability that the process jumps up whenever  possible can be estimated 
by 
$e^{-\sum_{k=y}^{\infty}k^{\frac{\beta_1}{\gamma}-\beta_2}}$.
Both a) and b) yield, that  $\frac{\beta_1}{\gamma}-\beta_2<-1$.
Therefore,  the expression 
in the exponent above is a tail of a convergent series, so that it tends to $0$, and, hence, the exponent 
 tends to $1$, as $y\to\infty$.

  Part 2) of Lemma~\ref{gamma-1}  is the  analogue  of 
 Lemma~\ref{Cor-A1} and  can be proven similarly. 
\end{proof}

\begin{remark}
\label{Cor1-bisector}
{\rm 
Note a special case of Lemma~\ref{gamma-1}, when
 $\beta_2-\beta_1>1$ and $\gamma=1$ (i.e. the upper boundary is given by the bisector). 
In this case,  
the process started at a state $(x,x)$  follows  the "zigzag" trajectory 
$$\zeta=(x, x)\to (x+1, x)\to (x+1, x+1)\to (x+2, x+1)\to (x+2, x+2)\to....$$
with $\P_{\zeta}$ probability tending to $1$, as $x\to\infty$.
}
\end{remark}

\begin{definition}
\label{t-S-up}
{\rm 
Let $\gamma\geq 1$ be the parameter determining the lower boundary $y=x^{\gamma}$.
Given $r>0$ and   
 $\zeta=(x,y)$ (belonging to the state space of the process) 
define $n_y$ as an integer such that 
\begin{equation}
\label{n0}
\left(n_yr\right)^{\gamma}\leq y<\left((n_y+1)r\right)^{\gamma}
\end{equation}
and 
$$\tilde t_{r,n}=
\min(t: \zeta_2(t_{r,n} )\geq \left(nr\right)^{\gamma})\quad\text{for}\quad n>n_y.$$
}
\end{definition}
\begin{definition}
\label{Srn-up}
{\rm 
Let $(\tilde t_{r,n},\, n>n_y)$ be the sequence of time moments defined in Definition~\ref{t-S-up}
for some $\zeta=(x,y)$.
Define 
$\widetilde S_{r, n}$ as the random variable that is equal to the number of jumps to the right 
of the  DTMC $\zeta(t)$ on the interval $[\tilde t_{r, n}, \tilde t_{r, n+1})$. 
}
\end{definition}

\begin{remark}
{\rm 
Note that if $\beta_2-\beta_1<1$, then 
$\gamma_k\geq 1$ only for  $k=1,...,k_{max}$, where 
 $k_{max}$ is the  maximal integer, such that 
$k\leq \frac{1}{\beta_2-\beta_1}$.
}
\end{remark}

\begin{lemma}
\label{2k-1+eps-upper}
Let $\beta_2-\beta_1<1$ and $\max(1, \gamma_k)<\gamma\leq \gamma_{k-1}$ for some 
integer $k\geq 2$, 
and let $\zeta=(x,y)\in \widetilde{U}_{\gamma, C}$ for some $C>0$.
Then,  for any given  $r>0$, 
\begin{equation}
\lim\limits_{y\to\infty}\P_{\zeta}\left(\widetilde{S}_{r, n}<k
 \text{ for all } n\right)=1
\label{Brk-up}
\end{equation}
and 
\begin{equation}
\P_{\zeta}\left(\widetilde{S}_{r, n}=k-1 \text{ for infinitely many } n\right)=1.
\label{Brk1-up}
\end{equation}
\end{lemma}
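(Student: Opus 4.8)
The plan is to mirror exactly the proof of Lemma~\ref{2k-1+eps}, exploiting the symmetry between the lower boundary $y=x^{\alpha}$ (with the roles of $\beta_1,\beta_2$) and the upper boundary $y=x^{\gamma}$ (with the roles of $\beta_1,\beta_2$ interchanged via $x\leftrightarrow y$, $\alpha\leftrightarrow 1/\gamma$). First I would record the key approximation: by~\eqref{p-right}, for $\zeta=(x,y)$ with $y\sim x^{\gamma}$ the probability of a jump to the right is $\sim x^{\beta_1-\gamma\beta_2}=\sim y^{(\beta_1-\gamma\beta_2)/\gamma}$, and the condition $\max(1,\gamma_k)<\gamma\leq\gamma_{k-1}$ translates — via the definition~\eqref{gamma-k} of $\gamma_k$ — into the two-sided inequality
\begin{equation*}
\frac{\beta_1-\gamma\beta_2+\gamma}{\gamma}\,k<-1\leq\frac{\beta_1-\gamma\beta_2+\gamma}{\gamma}\,(k-1),
\end{equation*}
exactly analogous to~\eqref{k_k-1}. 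I would then state the analogue of Lemma~\ref{PR-binom} for the number of jumps to the right on a block of $\widetilde t$-time of the appropriate length (of order $y^{1-1/\gamma}$), giving Poisson-type bounds $C_3\widetilde\lambda_y^{\,m}\leq\P_{\zeta}(\widetilde\xi\geq m)\leq C_4\widetilde\lambda_y^{\,m}$ with $\widetilde\lambda_y=y^{(\beta_1-\gamma\beta_2+\gamma)/\gamma}$; this is proved word for word as Lemma~\ref{PR-binom} with the obvious substitutions, and I would simply say so rather than reproduce it.

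Next, for the upper bound~\eqref{Brk-up}: as in Lemma~\ref{2k-1+eps} I would first treat sufficiently large $r$. On each interval $[\widetilde t_{r,n},\widetilde t_{r,n+1})$ the upper boundary $y=x^{\gamma}$ advances (in the $x$-direction) by $r$, so if $\widetilde S_{r,n}<k$ for all $n$ the process cannot escape the strip $\widetilde U_{\gamma,2k-1+C}$; moreover, using $\zeta_2(\widetilde t_{r,n})-(rn)^{\gamma}$ is bounded and an analogue of~\eqref{x_n-growth}, one gets
\begin{equation*}
\P_{\zeta}\bigl(\widetilde S_{r,n}\geq k\,\big|\,\widetilde S_{r,n_y}<k,\dots,\widetilde S_{r,n-1}<k\bigr)\leq C_3\widetilde\lambda_n^{\,k},\qquad \widetilde\lambda_n:=(rn)^{(\beta_1-\gamma\beta_2+\gamma)/\gamma},
\end{equation*}
and the left inequality in the displayed two-sided bound gives $\sum_n\widetilde\lambda_n^{\,k}<\infty$, hence $\prod_n(1-C_3\widetilde\lambda_n^{\,k})\to1$ as $y\to\infty$. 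For arbitrary $r$ I would use the same covering argument: any $r$-interval sits inside an $r'$-interval for large $r'$, and infinitely many $\{\widetilde S_{r,n}\geq k\}$ would force infinitely many $\{\widetilde S_{r',n}\geq k\}$, which has probability tending to zero. For~\eqref{Brk1-up}, the lower Poisson bound gives $\P_{\zeta}(\widetilde S_{r,n}\geq k-1\mid\widetilde{\mathcal F}_{n-1})\geq C_2\widetilde\lambda_n^{\,k-1}$, the right inequality in the two-sided bound gives $\sum_n\widetilde\lambda_n^{\,k-1}=\infty$, and the conditional Borel--Cantelli lemma (Theorem 5.3.2 in~\cite{Durrett}) finishes it.

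I do not expect any genuine obstacle here, since the statement is the verbatim dual of Lemma~\ref{2k-1+eps}; the only point requiring a little care is bookkeeping the exponent arithmetic so that the hypothesis $\max(1,\gamma_k)<\gamma\le\gamma_{k-1}$ is correctly converted into convergence/divergence of $\sum\widetilde\lambda_n^{\,k}$ and $\sum\widetilde\lambda_n^{\,k-1}$ respectively — in particular checking that the extra constraint $\gamma>1$ (equivalently $\beta_2-\beta_1<1$ ensuring $\gamma_k\ge1$ for the relevant range of $k$) is exactly what guarantees the jump-to-the-right probability is the small one and that the block length $\sim y^{1-1/\gamma}\to\infty$. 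Given the authors' stated convention of suppressing such routine verifications and their explicit remark that the upper-boundary results are "similar" to the lower-boundary ones, I would keep the write-up short: state the two displayed inequalities, cite Lemma~\ref{PR-binom} and Lemma~\ref{2k-1+eps} as templates, and indicate the substitutions $x\mapsto y$, $\alpha\mapsto1/\gamma$, $\beta_1\leftrightarrow\beta_2$.
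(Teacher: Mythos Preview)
Your approach is exactly the paper's: its entire proof of Lemma~\ref{2k-1+eps-upper} is the sentence ``an analogue of Lemma~\ref{2k-1+eps} and can be proven in a similar way (by using an analogue of Lemma~\ref{PR-binom})'', and you have correctly identified the substitution $x\mapsto y$, $\alpha\mapsto 1/\gamma$, $\beta_1\leftrightarrow\beta_2$ that carries one argument to the other.

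However, the exponent bookkeeping you flagged as the one delicate point is wrong as written, and the slip is not cosmetic: with your value of $\widetilde\lambda$ the series $\sum_n\widetilde\lambda_n^{\,k}$ does not converge under the stated hypothesis. Carrying out your own substitution in $\lambda_x=x^{\alpha\beta_2-\beta_1+1-\alpha}$ (or simply multiplying the jump-right probability $y^{\beta_1/\gamma-\beta_2}$ by the block length $y^{1-1/\gamma}$) gives
\[
\widetilde\lambda_y=y^{(\beta_1-\gamma\beta_2+\gamma-1)/\gamma},\qquad
\widetilde\lambda_n=(rn)^{\beta_1-\gamma\beta_2+\gamma-1},
\]
not $y^{(\beta_1-\gamma\beta_2+\gamma)/\gamma}$; you have dropped a $-1$ in the numerator (and, separately, mishandled the passage from $y$ to $n$). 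The correct two-sided inequality replacing~\eqref{k_k-1} is
\[
k\bigl(\beta_1-\gamma\beta_2+\gamma-1\bigr)<-1\le(k-1)\bigl(\beta_1-\gamma\beta_2+\gamma-1\bigr),
\]
which is equivalent to $\gamma_k<\gamma\le\gamma_{k-1}$ by a one-line check. A quick sanity test is $\gamma=1$: the correct version reduces to $1/k<\beta_2-\beta_1\le 1/(k-1)$, matching Remark~\ref{bi}, whereas your displayed inequality would force $\beta_2-\beta_1>1+1/k$, contradicting the standing assumption $\beta_2-\beta_1<1$. Once this is repaired, everything else in your outline goes through verbatim.
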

Lemma~\ref{2k-1+eps-upper} is an analogue of Lemma~\ref{2k-1+eps} and can be proven 
in a similar way (by using an analogue of Lemma~\ref{PR-binom}). 
\begin{remark}
\label{bi}
{\rm Note that in the special case $\gamma=1$ in 
Lemma~\ref{2k-1+eps-upper}  we have that 
 $((n+1)r)^{\gamma}-(nr)^{\gamma}=(n+1)r-nr=r$, which implies that the intervals 
$[t_{r,n}, t_{r,n+1})$ are uniformly bounded by a constant. In turn, this implies 
that 
$$\P_{\zeta}\left(\widetilde{S}_{r, n}\geq k|
\widetilde{S}_{r, n_y}<k,...,\widetilde{S}_{r, n-1}<k
\right)\sim O(1)n^{(\beta_1-\beta_2)k}$$
for any possible integer $k\geq 0$.
Therefore, if $\frac{1}{k}<\beta_2-\beta_1\leq \frac{1}{k-1}$ (i.e. 
$k-1\leq \frac{1}{\beta_2-\beta_1}<k$), then 
 $\sum_n n^{(\beta_1-\beta_2)k}<\infty$ and  $\sum_n n^{(\beta_1-\beta_2)(k-1)}=\infty$, which 
implies~\eqref{Brk-up} and~\eqref{Brk1-up} respectively. 
}
\end{remark}
\begin{lemma}
\label{gamma-k}
Let $\beta_2-\beta_1<1$ and $\max(1, \gamma_k)<\gamma\leq \gamma_{k-1}$ for some $k\geq 2$ and 
let  $\zeta=(x,y)\in \widetilde{U}_{\gamma, C}$ for some $C>0$. Then 
$\lim_{y\to\infty}\P_{\zeta}(\widetilde{A}_{\gamma, k})=1$, 
where the event  $\widetilde{A}_{\gamma, k}$ is  defined in~\eqref{Ak-tilde}.
\end{lemma}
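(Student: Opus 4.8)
The plan is to mimic the proof of Lemma~\ref{0+k}, but now with the roles of the two coordinates interchanged and with the upper boundary $y=x^{\gamma}$, $\gamma\ge 1$, playing the role of the lower boundary. First I would fix a sufficiently large $r>0$ and restrict attention to trajectories for which $\widetilde S_{r,n}<k$ for all $n$; on the complementary event we are done by~\eqref{Brk-up} of Lemma~\ref{2k-1+eps-upper}, whose probability (after sending $y\to\infty$) is $1$. On such a trajectory, comparing the growth of $\zeta_2(t)$ on the interval $[\tilde t_{r,n},\tilde t_{r,n+1})$ with the growth of the inverse boundary $x=y^{1/\gamma}$ on the same interval shows that the process is reflected by the upper boundary at least once in each such interval, hence infinitely often. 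By (the upper-boundary analogue of) Part~1) of Proposition~\ref{epsStrip1}, this forces $\liminf_{t\to\infty}(\zeta_1(t)-\zeta_2^{1/\gamma}(t))=0$ with $\P_\zeta$-probability tending to $1$ as $y\to\infty$.

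Next I would control the $\limsup$. Let $\tau_n$ be the time of the $n$-th reflection by the upper boundary and let $S_n$ be the number of jumps to the right on $[\tau_n,\tau_{n+1})$; on the event $\{S_n<k\text{ for all }n\}$ the inter-reflection times $\tau_{n+1}-\tau_n$ are uniformly bounded, so each such interval sits inside some deterministic interval $[\tilde t_{r,n},\tilde t_{r,n+1})$ for $r$ large enough, and Lemma~\ref{2k-1+eps-upper} gives $\lim_{y\to\infty}\P_\zeta(S_n<k\text{ for all }n)=1$. Combining this with (the upper-boundary analogue of) Part~2) of Proposition~\ref{epsStrip1} yields, with probability tending to $1$, that $\zeta(t)$ eventually confines to the strip $\widetilde U_{\gamma,k+\eps}$ for any $\eps>0$, i.e. $\limsup_{t\to\infty}(\zeta_1(t)-\zeta_2^{1/\gamma}(t))\le k$. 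For the matching lower bound on the $\limsup$, I would invoke~\eqref{Brk1-up}: choosing $r$ so small that $(nr)^\gamma$-spaced levels of $\zeta_2$ move the boundary by at most an arbitrarily small $\eps'>0$, the event that $\widetilde S_{r,n}=k-1$ happens for infinitely many $n$ (which has probability $1$) forces the process to leave the strip $\widetilde U_{\gamma,k-\eps}$ infinitely often, so $\limsup_{t\to\infty}(\zeta_1(t)-\zeta_2^{1/\gamma}(t))=k$.

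Putting the two pieces together gives $\lim_{y\to\infty}\P_\zeta(\widetilde A_{\gamma,k})=1$ for $\zeta=(x,y)\in\widetilde U_{\gamma,C}$, which is exactly the claim. Since Lemma~\ref{gamma-k} is explicitly stated to be the upper-boundary analogue of Lemma~\ref{0+k}, I would present the argument mostly by reference: state the analogues of~\eqref{liminf=0} and~\eqref{limsup=k} (with $\zeta_1(t)-\zeta_2^{1/\gamma}(t)$ in place of $\zeta_2(t)-\zeta_1^{\alpha}(t)$) and note that each follows from Lemma~\ref{2k-1+eps-upper} and Proposition~\ref{epsStrip1} exactly as in the proof of Lemma~\ref{0+k}.

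The main obstacle is making sure the geometric comparison between the trajectory's growth and the boundary's growth is genuinely valid for $\gamma\ge 1$, and in particular at the borderline case $\gamma=1$: there the inverse boundary is the bisector again, the level spacings $(nr)^\gamma$ are exactly $r$, and one must check (cf. Remark~\ref{bi}) that the inter-reflection intervals are still uniformly bounded under $\{\widetilde S_{r,n}<k\}$ so that Borel--Cantelli-type summability arguments behind~\eqref{Brk-up}--\eqref{Brk1-up} apply with the correct exponents. Everything else is a routine transcription of Section~\ref{lower}.
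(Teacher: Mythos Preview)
Your proposal is correct and is exactly the approach the paper takes: the paper's entire proof is the sentence ``Lemma~\ref{gamma-k} is an analogue of Lemma~\ref{0+k} and can be proven similarly,'' and you have faithfully unpacked that analogy using Lemma~\ref{2k-1+eps-upper} and the upper-boundary version of Proposition~\ref{epsStrip1}.

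One minor correction: the hypothesis $\max(1,\gamma_k)<\gamma\le\gamma_{k-1}$ forces $\gamma>1$ strictly, so the ``main obstacle'' you flag at $\gamma=1$ does not arise in this lemma at all; that borderline is treated separately in Lemma~\ref{Cor2-bisector} (and Remark~\ref{Cor1-bisector}), not here. With $\gamma>1$ the inverse boundary $x=y^{1/\gamma}$ is strictly sublinear in $y$, so the geometric comparison and the uniform boundedness of inter-reflection intervals under $\{\widetilde S_{r,n}<k\}$ go through exactly as in Section~\ref{lower} with no special care needed.
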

Lemma~\ref{gamma-k} is an analogue of Lemma~\ref{0+k} and can be proven similarly.

Lemma~\ref{Cor2-bisector} below  is a refinement (implied by Remark~\ref{bi}) of Lemma~\ref{gamma-k}
in the case,   when  the upper boundary  is  the bisector.

\begin{lemma}
\label{Cor2-bisector}
 Let $\gamma=1$ and let $\zeta=(x, x)$ for some $x\in \N$.
If $\frac{1}{k}<\beta_2-\beta_1\leq \frac{1}{k-1}$ (i.e. 
$k-1\leq \frac{1}{\beta_2-\beta_1}<k$)
for some integer $k\geq 2$, then
$$\lim_{x\to\infty}\P_{\zeta}(D_1\cap D_2\cap D_3)=1,$$
where events $D_i,\, i=1,2,3$ are  defined below
\begin{align*}
D_1&=\{\zeta_1(t)-\zeta_2(t)\leq k\text{  for all  } t\geq 0\},\\
D_2&=\{\zeta_1(t)-\zeta_2(t)=k\text{  for infinitely many  } t\},\\
D_3&=\{\zeta_1(t)=\zeta_2(t)\text{ for infinitely  many  } t\}.
\end{align*}
\end{lemma}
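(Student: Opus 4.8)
Throughout, write $\Delta(t):=\zeta_1(t)-\zeta_2(t)$, so $\Delta(0)=0$; since $\psi(x)=x$, a reflection by the upper boundary (rule~\eqref{prob24}) occurs at time $t$ exactly when $\Delta(t)=0$, i.e.\ at the diagonal states $(m,m)\mapsto(m+1,m)$. Let $0=\rho_0<\rho_1<\cdots$ be the successive reflection times (finitely or infinitely many) and put $y_j:=\zeta_2(\rho_j)=\zeta_1(\rho_j)$. Between $\rho_j$ and $\rho_{j+1}$ the chain first reflects to $(y_j+1,y_j)$ and then $\Delta$ performs a nearest-neighbour walk on $\{1,2,\dots\}$, started at $1$ and absorbed on hitting $0$, an upward step of $\Delta$ being a jump to the right (which leaves $\zeta_2$ fixed) and a downward step a jump up (which raises $\zeta_2$ by one); hence $\zeta_2$ is nondecreasing, $y_{j+1}\geq y_j+1$ and $y_j\geq x+j$. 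By~\eqref{p-right} with $\gamma=1$, at any state of the $j$-th inter-reflection interval with $\Delta\leq k$ the conditional probability of an upward step of $\Delta$ lies between $c\,\zeta_2^{\beta_1-\beta_2}$ and $C\,\zeta_2^{\beta_1-\beta_2}$, hence between $c\,y_j^{\beta_1-\beta_2}(1+o(1))$ and $C\,y_j^{\beta_1-\beta_2}$ for $x$ large. The hypothesis on $k$ reads $k(\beta_2-\beta_1)>1\geq(k-1)(\beta_2-\beta_1)$.

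The first step is to show $\P_\zeta(D_1)\to1$ as $x\to\infty$, where $D_1=\{\Delta(t)\leq k\ \forall\,t\ge 0\}=\{\tau=\infty\}$ with $\tau:=\inf\{t:\Delta(t)=k+1\}$. Before $\tau$ the walk $\Delta$ lives on $\{0,\dots,k\}$, is down-heavy with a forced upward step at $0$, hence positive recurrent, so the $\rho_j\wedge\tau$ are a.s.\ finite and the excursions are well defined up to $\tau$; in particular, on $D_1$ the chain is reflected by the bisector infinitely often, which is exactly $D_3$. The $j$-th excursion reaches level $k+1$ only if a nearest-neighbour walk from $1$ with upward probability at most $p_j:=C\,y_j^{\beta_1-\beta_2}$ reaches $k+1$ before $0$; by the gambler's-ruin estimate this has conditional probability $O(p_j^{\,k})=O\big((x+j)^{k(\beta_1-\beta_2)}\big)$. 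Summing these bounds over $j$ and using $k(\beta_2-\beta_1)>1$ gives $\P_\zeta(\tau<\infty)\leq C'\sum_{j\geq0}(x+j)^{k(\beta_1-\beta_2)}\to0$, i.e.\ $\P_\zeta(D_1)\to1$; this replaces the confinement step of Lemma~\ref{0+k} and Proposition~\ref{epsStrip1}. Moreover, on $D_1$ each inter-reflection excursion is a return to $0$ of a down-heavy walk on $\{0,\dots,k\}$ started at $1$, so its length has a uniformly bounded exponential moment; together with the conservation law $\zeta_1(t)+\zeta_2(t)=t+2x$ and $\zeta_1\geq\zeta_2$, a routine concentration estimate yields $\rho_j=O(j)$, hence $y_j=O(j)$, $\P_\zeta$-a.s.\ on $D_1$.

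It remains to check that $D_2=\{\Delta(t)=k\ \text{i.o.}\}$ holds $\P_\zeta$-a.s.\ on $D_1$. The $j$-th excursion reaches level $k$ as soon as its first $k-1$ steps are all jumps to the right --- during which $\zeta_2$ stays equal to $y_j$ --- an event of conditional probability at least $(c\,y_j^{\beta_1-\beta_2})^{k-1}\geq c''\,j^{(k-1)(\beta_1-\beta_2)}$ on $D_1$ for large $j$, using $y_j=O(j)$. Since $(k-1)(\beta_2-\beta_1)\leq1$, the series $\sum_j j^{-(k-1)(\beta_2-\beta_1)}$ diverges, so the conditional Borel--Cantelli lemma (Theorem~5.3.2 in~\cite{Durrett}), applied along the filtration of reflection times --- to a coupled chain that agrees with $\zeta$ on $D_1$ and is reflected at level $k$ afterwards, so that all reflection times are finite --- shows that infinitely many excursions reach level $k$ a.s.; on $D_1$ this is $\Delta(t)=k$ i.o. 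Combining the three parts, $\P_\zeta(D_1\cap D_2\cap D_3)=\P_\zeta(D_1)\to1$.

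The main obstacle is the interaction between the excursions of $\Delta$ and the simultaneous growth of $\zeta_2$ within an excursion: the upward probability of $\Delta$ is not constant but varies with $\zeta_2$, which is controlled by the crude two-sided bound $c\,\zeta_2^{\beta_1-\beta_2}\leq\P(\text{jump right})\leq C\,\zeta_2^{\beta_1-\beta_2}$ together with $\zeta_2\geq y_j$; and, for the lower half of the argument, one needs the a priori bound $y_j=O(j)$ on $D_1$ (from $\zeta_1+\zeta_2=t+2x$ and the exponential tails of the excursion lengths) so that the Borel--Cantelli series still diverges.
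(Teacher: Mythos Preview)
Your argument is correct and complete in its essentials; the only points that require care are the ones you flag yourself (the gambler's-ruin bound with state-dependent step probabilities, handled by the monotone coupling $\P(\text{right})\le C\,y_j^{\beta_1-\beta_2}$ since $\zeta_2\ge y_j$ throughout an excursion; and the a.s.\ bound $y_j=O(j)$ on $D_1$, which indeed follows from uniform sub-exponential tails of the excursion lengths together with $2y_j=2x+\rho_j$).

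Your route, however, is genuinely different from the paper's. The paper does not use the excursion decomposition of $\Delta=\zeta_1-\zeta_2$ at all: following Remark~\ref{bi} it partitions time by the deterministic crossings $\tilde t_{r,n}=\min\{t:\zeta_2(t)\ge nr\}$, observes that for $\gamma=1$ these intervals have uniformly bounded length, and then bounds $\P(\widetilde S_{r,n}\ge m)\asymp n^{m(\beta_1-\beta_2)}$ directly via a binomial/Poisson approximation (Lemma~\ref{PR-binom}); the dichotomy $\sum n^{k(\beta_1-\beta_2)}<\infty$ versus $\sum n^{(k-1)(\beta_1-\beta_2)}=\infty$ then feeds into the analogue of Lemma~\ref{0+k}. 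Because the index $n$ already tracks $\zeta_2$, no separate control of $y_j$ is needed. Your excursion/gambler's-ruin approach is arguably more transparent in the bisector case---it yields the exact integer bound $\Delta\le k$ in one step, bypassing the intermediate $2k-1$ strip of~\eqref{Uk}---but it is tailored to $\gamma=1$, whereas the paper's level-crossing scheme is the same machinery used for all $\gamma\ge1$.
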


\subsubsection{Upper boundary:  below the bisector and above the critical curve}
\label{Up-sublinear}

Define 
$$W_{\gamma, C}=\{\zeta=(x,y)\in \N^2: x^{\gamma}-C<
y\leq x^{\gamma}\}\quad\text{for}\quad C>0.$$

\begin{lemma}
\label{LT4ba}
Let $1<\beta_1<\beta_2$ and 
 $\gamma\in(\alpha_{cr}, 1)$,  and let 
$\zeta=(x,y)\in W_{\gamma, C}$ for some $C>0$.
Then, 
$\lim_{x\to\infty}\P_{\zeta}(B_{\gamma})=1$,
where the event $B_{\gamma}$ is defined in~\eqref{Bgamma}.
\end{lemma}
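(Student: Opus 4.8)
The plan is to follow closely the template established in the lower-boundary analysis of Section~\ref{lower}, but now working along the sublinear upper boundary $y=x^{\gamma}$ with $\alpha_{cr}<\gamma<1$. The key observation driving everything is the analogue of~\eqref{p-up}: near the boundary the probability of a jump to the right is of order $x^{\beta_1-\gamma\beta_2}$ (this is~\eqref{p-right} with $\gamma<1$, so $\beta_1-\gamma\beta_2 < \beta_1 - \alpha_{cr}\beta_2 = \beta_1 - \beta_1 = 0$ when we also use $\gamma>\alpha_{cr}$; more precisely $\beta_1 - \gamma\beta_2$ may have either sign, but the relevant exponent controlling excursions, analogous to $\alpha\beta_2-\beta_1+1-\alpha$, will turn out to be $\beta_1 - \gamma\beta_2 + \gamma - 1$, which is negative precisely because $\gamma>\alpha_{cr}$ is equivalent to $\beta_1-1 > \gamma(\beta_2-1)$, i.e. $\beta_1 - \gamma\beta_2 + \gamma - 1 > 0$ — wait, that has the wrong sign). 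Let me reconsider: since $\gamma<1$, the upper boundary is sublinear, and the process, reflected off the upper boundary, tends to move left-and-up away from it only a bounded amount; the controlling quantity is how fast $x^{\gamma}$ grows versus how many rightward jumps occur per unit vertical displacement. The correct statement is that the process confines to a strip of width exactly $1$ along $y=x^{\gamma}$, which is why $B_{\gamma}$ (with $\limsup = 1$) is the target rather than some $\widetilde A_{\gamma,k}$ with larger $k$.

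First I would record the reflection geometry: when the process is reflected by the upper boundary (transition~\eqref{prob24}), it sits at a state with $x^{\gamma}-1 < y \le x^{\gamma}$ approximately, and the vertical gap $\psi(x) - \psi(x-1) = x^{\gamma}-(x-1)^{\gamma} \sim \gamma x^{\gamma-1} \to 0$ as $x\to\infty$ because $\gamma<1$. This is the exact analogue of~\eqref{delta(x)} and immediately gives, for any trajectory reflected by the upper boundary infinitely often, that $\liminf_{t\to\infty}(\zeta_1^{\gamma}(t)-\zeta_2(t)) = 0$ — an analogue of Part~1) of Proposition~\ref{epsStrip1}. Second, I would show that with $\P_{\zeta}$-probability tending to $1$ as $x\to\infty$, the process never makes two rightward jumps between consecutive upper reflections: the probability of a rightward jump near the boundary is $\sim x^{\beta_1-\gamma\beta_2}$ and one needs to sum these over the $O(x^{1-\gamma})$-long vertical stretches between reflections; the resulting series is summable precisely when $\beta_1 - \gamma\beta_2 + (1-\gamma) < 0$. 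Now $\gamma > \alpha_{cr} = \frac{\beta_1-1}{\beta_2-1}$ rearranges to $\gamma\beta_2 - \gamma > \beta_1 - 1$, i.e. $\beta_1 - \gamma\beta_2 + (1-\gamma) < 0$ — so yes, the series converges, giving confinement to the strip $W_{\gamma, 2}$ and then, after the first reflection, essentially to a width-$1$ strip. This mirrors Lemmas~\ref{LT4aa} and~\ref{Cor-A1} exactly (indeed, since the controlling exponent is always $<0$ in the whole range $\gamma\in(\alpha_{cr},1)$, there is no hierarchy of $\gamma_k$'s here — the strip width is always $1$, which is the content of $B_{\gamma}$).

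Third, I would establish $\limsup_{t\to\infty}(\zeta_1^{\gamma}(t)-\zeta_2(t)) = 1$ from below, i.e. that the process does exit any strip of width $1-\eps$ infinitely often. For this, following the proof of Lemma~\ref{0+k} item~b), I would pick a deterministic exhaustion of time into intervals on which the boundary grows by an arbitrarily small $\eps'$, and apply a conditional second Borel–Cantelli argument: the probability of at least one rightward jump per such interval, conditioned on the past, is bounded below by a constant times a term whose series diverges (the $k=1$ analogue of~\eqref{Pk2} — here $\sum \lambda_n$ with $\lambda_n$ of order $n^{(\beta_1-\gamma\beta_2)/\gamma}$, and divergence holds because $\beta_1 - \gamma\beta_2 > -\gamma$, again from $\gamma<1$, or more carefully from the boundary being sublinear). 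Combining the $\liminf = 0$ statement, the width-$1$ confinement, and the infinitely-many-excursions statement yields $\P_{\zeta}(B_{\gamma}) \to 1$ as $x\to\infty$. The main obstacle, as in the lower-boundary case, is the bookkeeping needed to pass from "confinement with probability $\to 1$" (which is all the Poisson/binomial approximations of Lemma~\ref{PR-binom}-type give us on a fixed deterministic time window) to the almost-sure $\limsup$ and $\liminf$ statements on the whole time axis; this requires the trick, used in Lemma~\ref{0+k}, of noting that on the good event the inter-reflection times are uniformly bounded, so one can cover them by a deterministic $r$-interval partition and chain the estimates. One should also double-check that the reflection rule~\eqref{prob24} genuinely forces a rightward step (it does, by construction) so that the process cannot get "stuck" oscillating vertically near a point of the sublinear boundary.
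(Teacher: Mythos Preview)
Your proposal has a genuine gap: it implicitly assumes that near the upper boundary the probability of a free jump to the \emph{right} is small, i.e.\ $\sim x^{\beta_1-\gamma\beta_2}$ with $\beta_1-\gamma\beta_2<0$. But $\gamma>\alpha_{cr}=\frac{\beta_1-1}{\beta_2-1}$ does \emph{not} imply $\gamma>\frac{\beta_1}{\beta_2}$; in fact $\alpha_{cr}<\frac{\beta_1}{\beta_2}$ whenever $\beta_1<\beta_2$, so the sub-range $\gamma\in(\alpha_{cr},\frac{\beta_1}{\beta_2})$ is nonempty. (Your own arithmetic ``$\alpha_{cr}\beta_2=\beta_1$'' is incorrect and you noticed something was off.) For $\gamma$ in this sub-range one has $\gamma\beta_2-\beta_1<0$, so the probability of a free \emph{up} jump is $\sim x^{\gamma\beta_2-\beta_1}\to 0$ and the probability of a free right jump is $\to 1$. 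Consequently, between the last reflection of one batch and the first reflection of the next, the number of free rightward jumps is of order $x^{\beta_1-\gamma\beta_2}\to\infty$, and your claim ``the process never makes two rightward jumps between consecutive upper reflections'' is simply false in this regime. Your argument, as written, covers only the range $\gamma\in[\tfrac{\beta_1}{\beta_2},1)$.

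The paper's proof avoids this by \emph{not} counting rightward jumps between reflections. Instead it partitions time by the level-crossings of $\zeta_1^{\gamma}$ (stopping times $t_n,t'_n$ at which $\zeta_1^{\gamma}$ passes $x^{\gamma}+n+\eps$, etc.) and lets $S_n$ be the number of \emph{up} jumps on $[t_n,t'_{n+1})$. Geometrically at most one up jump can occur on such an interval (an up jump drops the distance $\zeta_1^{\gamma}-\zeta_2$ by $1$, putting the process back in the reflection zone), so it suffices to show $\P(S_n=0)\to 0$ fast enough that $\prod_n(1-\P(S_n=0))\to 1$. This is done separately in the two cases: for $\gamma\in(\alpha_{cr},\tfrac{\beta_1}{\beta_2})$ one gets $\P(S_n=0)\sim e^{-c(x^{\gamma}+n)^{b}}$ with $b=\frac{\gamma(\beta_2-1)-(\beta_1-1)}{\gamma}>0$; for $\gamma\in[\tfrac{\beta_1}{\beta_2},1)$ one gets even faster decay. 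The event $\{S_n=1\text{ for all }n\}$ then directly yields both $\liminf(\zeta_1^{\gamma}-\zeta_2)=0$ and $\limsup(\zeta_1^{\gamma}-\zeta_2)=1$. Note also that your $\liminf=0$ step is not ``immediate'' from the reflection rule alone: at the moment of reflection the distance $\zeta_1^{\gamma}-\zeta_2$ lies in $[0,1)$ but need not be close to $0$; one really needs the control on the free excursion (i.e.\ the $S_n=1$ statement) to conclude it.
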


\begin{proof}[Proof of Lemma~\ref{LT4ba}] 

Without loss of generality, assume that the initial 
state  $\zeta=(x,y)\in W_{\gamma, 1}$, in which case
$y=x^{\gamma}-1+\eps_0$ for some $0<\eps_0<1$. 
Given  $0<\eps<1$, define the sequence of stopping times
\begin{align*}
t_{0}&=\min(t: \zeta_1^{\gamma}(t)\geq x^{\gamma}+\eps_0),\\
t_{n}&=\min(t: \zeta_1^{\gamma}(t)\geq x^{\gamma}+\eps+n)\text{ for }\,n\geq 1,\\
t'_{n}&=\min(\zeta_1^{\gamma}(t)\geq x^{\gamma}+2\eps+n)\,\text{ for }\,n\geq 1.
\end{align*}
Note that 
 $0\leq t_0<t_1'<t_1<...<t'_n<t_n<t'_{n+1}<t_{n+1}<...$.
It follows from equation~\eqref{prob24} and the choice of the initial condition, that   the 
DTMC $\zeta(t)$  is reflected by the upper boundary all the time in the interval $t\in [0, t_0)$.
Also, the process can jump up only {\it once} in the interval $[t_0, t'_1)$. 
We show that, with $\P_{\zeta}$ probability tending to $1$, as $x\to \infty$, 
the  process behaves  in the same manner during  the subsequent time intervals.
Namely, it   jumps up  on each  time interval  $[t_n, t'_{n+1})$, and, as a result, 
it is   reflected by the upper boundary all the time on each time interval 
 $t\in [t'_{n}, t_{n})$.
Clearly,  such a behavior implies that  the process stays inside the strip $W_{\gamma, 1+\eps}$ 
for all $t\geq t'_1$. 
It is convenient to introduce random variables $S_n,\, n\geq 0$, where 
 $S_{n}$ is the number of  jumps up of the DTMC $\zeta(t)$ 
in the interval $[t_n, t'_{n+1})$. In terms of these random variables
it suffices  to show that 
\begin{equation}
\label{S_n=1}
\lim_{\zeta\to\infty}\P_{\zeta}(S_n=1\, \text{ for all } \, n\geq 0)=1,
\end{equation}
to guarantee the described above behavior of the process.

To proceed, observe first that, under our assumptions on the initial condition,
if $S_0=1, ..., S_{n-1}=1$, then 
\begin{align}
\label{trn1}
N_n&:=t_{n+1}'-t_{n}\sim 
O(1)\left(x^{\gamma}+n\right)^{(1-\gamma)/\gamma},\\
\label{trn2}
\zeta_{1, n}&:=\zeta_1(t_n)\sim t_n+x\sim O(1)\left(x^{\gamma}+n\right)^{1/\gamma},
\end{align}
as $x, n\to\infty$.

Next, consider two cases.
\paragraph{{\it Case 1):} $\gamma\in (\alpha_{cr}, \frac{\beta_1}{\beta_2}).$}

In this case $\gamma\beta_2-\beta_1<0$, so that  
$$
\P_{\zeta}(\zeta_1(1)=x+1,\, \zeta_2(1)=y)\sim 1-x^{\gamma\beta_2-\beta_1}
\text{  for }\,
\zeta=(x,y): y\sim x^\gamma, \text{  as }\, x\to\infty.
$$
The above gives that 
\begin{equation}
\label{mb0}
\P_{\zeta}(S_n=0|S_0=1, ..., S_{n-1}=1)
\sim  \left(1-\zeta_{1,n}^{\gamma\beta_2-\beta_1}\right)^{N_n}\sim 
e^{-\zeta_{1,n}^{\gamma\beta_2-\beta_1} N_n},
\end{equation}
for $n\geq 0$.

By~\eqref{trn1}, ~\eqref{trn2}and~\eqref{mb0},
\begin{equation}
\label{mb1}
\P_{\zeta}(S_n=1|S_0=1, ..., S_{n-1}=1)
\sim 
1-e^{-a(x^{\gamma}+\eps+n)^{b }},
\end{equation}
where $a=2\eps/\gamma$ and 
$b=\frac{1-\gamma+\gamma\beta_2-\beta_1}{\gamma} >0,$
and, hence, 
\begin{equation*}
\label{lim1}
\begin{split}
\lim\limits_{\zeta\to\infty}\P_{\zeta}\left(S_{n}=1\, \text{ for all }\, n\geq 0\right)
&=
\lim\limits_{x\to\infty}\prod\limits_{n=0}^{\infty}
\left(1-e^{-a(x^{\gamma}+\eps+n)^{b}}\right)=1,
\end{split}
\end{equation*}
as required.

\paragraph{{\it Case 2):} $\gamma\in [\frac{\beta_1}{\beta_2}, 1).$}
In this case $\beta_1-\gamma\beta_2<0$ and, hence,  if 
$\zeta=(x,y): y\sim x^\gamma$, then 
$$
\P_{\zeta}(\zeta_1(1)=x+1,\, \zeta_2(1)=y)\sim \begin{cases}
x^{\beta_1-\gamma\beta_2}, &
\text{ if }\,\frac{\beta_1}{\beta_2}<\gamma<1,\\
 \frac{1}{2}, & \text{ if } \gamma=\frac{\beta_1}{\beta_2},
\end{cases}
$$
 as $\zeta\to\infty$.
Therefore, 
\begin{equation}
\label{nmb21}
\P_{\zeta}(S_{n}=0|S_{0}=1, ..., S_{n-1}=1)\sim 
\zeta_{1, n}^{\frac{\beta_1-\gamma\beta_2}{\gamma}N_n}=
e^{\frac{\eps(\beta_1-\gamma\beta_2)}{\gamma}\log(\zeta_{1,n})\zeta_{1,n}^{1-\gamma}},
\quad\text{if}\quad \frac{\beta_1}{\beta_2}<\gamma<1,
\end{equation}
and 
\begin{equation}
\label{nmb22}
\P_{\zeta}(S_{n}=0|S_{0}=1, ..., S_{n-1}=1)\sim 
\left(\frac{1}{2}\right)^{\zeta_{1,n}^{1-\gamma}},
\quad\text{if}\quad \gamma=\frac{\beta_1}{\beta_2}.
\end{equation}
Consequently, 
$
\lim_{\zeta\to\infty}
\P_{\zeta}\left(S_n=1\, \text{ for all }\,  n\geq 0\right)=1
$
 in  both $\frac{\beta_1}{\beta_2}<\gamma<1$
and $\gamma=\frac{\beta_1}{\beta_2}$ cases, 
as required.
The lemma is  proved.
\end{proof}

\subsection{Proofs of Theorems~\ref{main-sym} and~\ref{main-asym}}
\label{proof-reflection-theorems}

\begin{proof}[Proof of  Theorem~\ref{main-sym}]

Recall quantities $\alpha_k,\, k\geq 0$ defined in~\eqref{alpha-k}. It follows from~\eqref{alpha_k<alpha_k+1}, that 
for a given $\alpha\in (0, \alpha_{cr})$ there exists $k\geq 1$, such that
$\alpha\in[\alpha_{k-1}, \alpha_k)$. It is easy to verify  that 
$\alpha\in[\alpha_{k-1}, \alpha_k)$ is equivalent to~\eqref{k1-sym}.
Similarly, given $\gamma>1$, one can determine a unique $k$, such 
that $\gamma\in(\max(1, \gamma_{k}), \gamma_{k-1}]$, or, equivalently,  $k$ 
satisfying~\eqref{k2-sym}.
It follows from Theorem~\ref{T1a} (for the BB model without 
reflection) that, with probability one, the DTMC $\zeta(t)$  hits either the lower, or the upper boundary.
If follows then from  Lemmas~\ref{Cor-A1},~\ref{0+k} and~\ref{gamma-k}, that, with probability 
one, either the event $A_{\alpha, k_1}$, or the 
event $\widetilde{A}_{\gamma, k_2}$ occurs. 
It is left to note that these events are disjoint, which finishes the proof.
\end{proof}

\begin{proof}[Proof of Theorem~\ref{main-asym}]
The proof of Theorem~\ref{main-asym} is similar   to the proof of Theorem~\ref{main-sym}.
It only needs to be complemented by using  Lemma~\ref{Cor2-bisector}, 
Remark~\ref{Cor1-bisector} and Lemma~\ref{LT4ba}, where appropriate.
For example,  Lemma~\ref{LT4ba} is needed to deal with 
 the case of the upper boundary given by $y=x^{\gamma}$ for $\alpha_{cr}<\gamma\leq 1$.
We skip further details.
\end{proof}

\section{Open problem}
\label{open}

In this paper we have given a complete classification of the long term behavior 
of the BB model with reflection in the domain 
 $Q_{\varphi, \psi}$ (defined in~\eqref{domain}), 
where the lower and the upper boundaries are given 
by $\varphi(x)=x^{\alpha}$ and $\psi(x)=x^{\gamma}$ respectively for some 
$0<\alpha<\alpha_{cr}<\gamma$.
Specifically, we have shown that, for any initial state $\zeta\in Q_{\varphi, \psi}$, with probability one, 
 the DTMC $\zeta(t)$ eventually (i.e. after a finite time depending on the initial condition)
confines either to the strip $U_{\alpha, k_1}$ (along the lower boundary), or to the strip
 $\widetilde{U}_{\gamma, k_2}$ (along the upper boundary), where integers  $k_1$ and $k_2$ 
(interpreted as widths of the strips)
 can be calculated for a given set of the model parameters. We would like to stress that 
 widths of  both  strips are {\it finite}. 

We believe in a rather different behavior of the process in the case, when 
$\varphi(x)=\nu_{cr}x^{\alpha_{cr}}-b_1x^{\delta_1}$ and
 $\psi(x)=\nu_{cr}x^{\alpha_{cr}}+b_2x^{\delta_2}$
for some $b_1, b_2>0$ and $0<\delta_1, \delta_2<\alpha_{cr}$.
We conjecture that 
if both $0<\delta_1\leq \alpha_{cr}/2$ and $0<\delta_1\leq \alpha_{cr}/2$, then, with probability one, 
the process will be reflected infinitely many times by both the  lower and the upper boundary, i.e. 
 the process "visits"  {\it both} boundaries infinitely many times.
On the other hand, if  $\alpha_{cr}/2<\delta_1\leq \alpha_{cr}$ and 
$\alpha_{cr}/2<\delta_2\leq \alpha_{cr}$, then, with probability one, the process 
eventually sticks to one of the boundaries.  At the same time, deviations of the process 
from the  "final" boundary are not uniformly bounded, so that 
the process 
 eventually confines  to an indefinitely expanding  strip along one of the boundaries.
The open problem of interest is to determine the asymptotic width of the strip.


\begin{thebibliography}{100}
\bibitem{CMSV} Costa, M., Menshikov, M., Shcherbakov, V. and Vachkovskaia, M. (2018). 
Localisation in a growth model with interaction.  {\it Journal of Statistical Physics}, {\bf 171},  Issue 6, pp.~1150--1175.
\bibitem{Davis} Davis, B. (1990). Reinforced random walk. {\it Probability Theory and  Related Fields},  {\bf 84},  pp.~203--229.
\bibitem{Durrett} Durrett, R. (2010). Probability: theory and examples. Forth Edition.
Cambridge University Press. 
\bibitem{Feller} Feller, W. (1971).
 An introduction to probability theory and its applications. Volume II. 2nd Edition. Willey$\&$Sons.
\bibitem{Spencer} Mitzenmacher, M.,  Oliveira, R. I., and Spencer,  J. (2004). A scaling result for explosive processes.
{\it Electronic Journal of Combinatorics},  {\bf 11}.

\bibitem{MMW} Menshikov, M., Mijatovic, A., and  Wade, A.R. (2021). 
Reflecting Random Walks in Curvilinear Wedges. In: Vares M.E., Fern\'{a}ndez R., Fontes L.R., Newman C.M. (eds) {\it In and Out of Equilibrium 3: Celebrating Vladas Sidoravicius. 
Progress in Probability}, vol. {\bf 77}. Birkh\"{a}user, Cham. 
https://doi.org/10.1007/978-3-030-60754-8-26. pp.~637--675.

\bibitem {MS18} Menshikov, M. V. and Shcherbakov, V.  (2018). 
Long term behaviour  of two interacting birth-and-death processes. 
{\it Markov Processes and Related Fields}, v.~{\bf 24}, N1, pp.~85--102.
\bibitem{MS20} Menshikov, M. and Shcherbakov, V. (2020). 
Localisation in a growth model with interaction. Arbitrary graphs. 
{\it ALEA, Latin American Journal in Probability and Mathematical Statistics}, v. {\bf 17}, pp.~473--489.
\bibitem{Oliveira} Oliveira, R. I. (2009).  The onset of dominance in balls-in-bins processes with
feedback. {\it Random Structures and Algorithms}, {\bf 34}, 4, pp.~454--477.
\bibitem{SV10}  Shcherbakov, V. and Volkov, S. (2010).
Stability of a growth process generated by monomer filling with nearest-neighbour cooperative effects.
{\it Stochastic Processes and their Applications}, {\bf 120}, N6, pp.~926--948.
\bibitem{SV19}  Shcherbakov, V. and Volkov, S. (2019).
Boundary effect in competition  processes.
 {\it Journal of Applied Probability}, v.~{\bf 56}, N3, pp.~750--768.

\end{thebibliography}
\end{document}